   \newtheorem{theorem}{Theorem}[section]
   \newtheorem{proposition}[theorem]{Proposition}
   \newtheorem{lemma}[theorem]{Lemma}
   \newtheorem{corollary}[theorem]{Corollary}
\theoremstyle{definition}
   \newtheorem{example}[theorem]{Example}
   \newtheorem{definition}[theorem]{Definition}
      \newtheorem{assumption}[theorem]{Assumption}
\newcommand{\QQ}{{\mathbb{Q}}}
\newcommand{\PP}{{\mathbb{P}}}
\newcommand{\ZZ}{{\mathbb{Z}}}
\newcommand{\ZZp}{{\mathbb{Z}_{\geq 0}}}
\newcommand{\bJ}{{\mathbf{J}}}
\newcommand{\bS}{{\mathbf{S}}}
\newcommand{\bX}{{\mathbf{X}}}
\newcommand{\bY}{{\mathbf{Y}}}
\newcommand{\bj}{{\mathbf{j}}}
\newcommand{\cG}{{\mathcal G}}
\newcommand{\cJ}{{\mathcal J}}
\newcommand{\cM}{{\mathcal M}}
\newcommand{\cO}{{\mathcal O}}
\newcommand{\logsch}{\operatorname{log.sch}}
\newcommand{\lct}{\operatorname{lct}}
\newcommand{\Spec}{\operatorname{Spec}}
\newcommand{\Hom}{{\operatorname{Hom}}}
\newcommand{\isom}{\simeq}
\newcommand{\bfA}{{\mathbb A}}
\newcommand{\setmin}{{\smallsetminus}}
\begin{document}
\title{Singularities of log varieties via jet schemes}

\author{Kalle Karu}
%\thanks{The author was partially supported by NSERC}
\address{Department of Mathematics\\ University of British Columbia \\
  1984 Mathematics Road\\
Vancouver, B.C. Canada V6T 1Z2}
\email{karu@math.ubc.ca}

\author{Andrew Staal}
%\thanks{The author was partially supported by NSERC}
\address{Department of Mathematics and Statistics\\ Queen's University \\
  Jeffery Hall, University Avenue\\
Kingston, ON Canada K7L 3N6}
\email{11aps3@queensu.ca}

\thanks{This work was partially supported by NSERC Discovery grant.}

\begin{abstract} We study logarithmic jet schemes of a log scheme and generalize a theorem of M. Musta{\c{t}}{\u{a}} from the case of ordinary jet schemes to the logarithmic case. If $X$ is a normal local complete intersection log variety, then $X$ has canonical singularities if and only if the log jet schemes of $X$ are irreducible. 
\end{abstract}

\maketitle

\section{Introduction}

 M. Musta{\c{t}}{\u{a}} proved in \cite{Mustata1} that a normal, local complete intersection variety $X$ has canonical singularities if and only if the jet schemes $J_m(X)$ are irreducible for all $m$. The goal of this article is to generalize this result to the case of log varieties.
 
 We follow K. Kato's terminology (cf.~ \cite{Kato}) and consider fine and saturated log schemes $\bX=(X,\cM_X)$, where $X$ is a scheme and $\cM_X$ a sheaf of monoids giving the log structure on $X$. We call $\bX$ a log variety if $X$ is a variety. The simplest log structures on $X$ are associated to open embeddings $U\subset X$, with 
\[ \cM_X = \{f\in \cO_X | \text{$f$ is invertible on $U$}\}.\]
Special cases of this include the trivial log structure $U=X$, toric varieties $T\subset Y$, and toroidal embeddings $U_X\subset X$. 

The log jet scheme $\bJ_m(\bX)$ of the log scheme $\bX$ was constructed in full generality by S.~Dutter in \cite{Dutter}. (The special case where the log structure on $X$ is associated to an open embedding $U\subset X$ was also worked out in \cite{Staal}.)  The log jet scheme $\bJ_m(\bX)$ is again a log scheme. Its $\bS$-valued points are the $\bS$-valued log jets in $\bX$:
\[ \Hom_{\logsch} (\bS, \bJ_m(\bX)) \isom \Hom_{\logsch}(\bS\times \bj_m, \bX).\]
Here $\bS$ is any log scheme, $j_m= \Spec k[t]/(t^{m+1})$ with trivial log structure $\cM_{j_m} = \cO^*_{j_m}$. This isomorphism is functorial in $\bS$ and gives the functor of points in $\bJ_m(\bX)$.

We are mostly interested in the underlying scheme  $J_m(\bX)$ of the log jet scheme. The main theorem we prove is the following:

\begin{theorem}\label{thm-main} Let $\bX$ be a normal, local complete intersection log variety. Then $\bX$ has canonical singularities if and only if $J_m(\bX)$ is irreducible for all $m>0$.
\end{theorem}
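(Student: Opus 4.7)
The plan is to adapt M.~Musta{\c{t}}{\u{a}}'s argument from \cite{Mustata1} to the logarithmic setting, with log jet schemes and log discrepancies replacing their classical counterparts.

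First I would establish a dimension estimate: if $\bX$ is a log l.c.i.\ log variety of dimension $n$, then every irreducible component of $J_m(\bX)$ has dimension at least $(m+1)n$, with equality on the component whose jets lie generically in the log smooth locus. Locally $\bX$ embeds in a log smooth log scheme $\bZ$ (for instance a toric chart) and is cut out by a regular sequence compatible with the log structure. The log jet scheme $\bJ_m(\bZ)$ is log smooth and irreducible of dimension $(m+1)\dim\bZ$, which one verifies by reducing to the toric case and computing directly from the monoidal description. Then $\bJ_m(\bX)\subset \bJ_m(\bZ)$ is cut out by the prolongations of the defining equations of $\bX$ in $\bZ$, yielding the lower bound. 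It follows that irreducibility of $J_m(\bX)$ is equivalent to every irreducible component having the expected dimension $(m+1)n$.

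Next I would choose a log resolution $\pi\colon \bY \to \bX$, i.e.\ a proper birational log \'{e}tale morphism with $\bY$ log smooth, and study the induced morphism $\pi_m\colon J_m(\bY) \to J_m(\bX)$. Since $\bY$ is log smooth, $J_m(\bY)$ is irreducible of dimension $(m+1)n$ by the first step. I would then stratify $J_m(\bX)$ by the order of contact of jets with the log Jacobian ideal sheaf of $\pi$, which measures the log discrepancies of the exceptional divisors. A log analogue of the motivic change-of-variables estimate should then show that the image in $J_m(\bX)$ of jets whose lifts to $\bY$ meet an exceptional divisor $E$ with prescribed contact has dimension controlled by the log discrepancy $a_E$; a negative $a_E$ would, for $m$ sufficiently large, produce a component of $J_m(\bX)$ of dimension strictly greater than $(m+1)n$, whereas non-negative log discrepancies keep every such stratum inside the main component.

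Combining these steps, $J_m(\bX)$ is irreducible for every $m$ if and only if every exceptional divisor over $\bX$ has non-negative log discrepancy, i.e.\ $\bX$ has canonical singularities. The main obstacle lies in the third step: setting up a log motivic change-of-variables formula precise enough to read off log discrepancies from the dimensions of jet contact loci. This requires a careful local computation of the log Jacobian ideal of $\pi$ on log smooth (toric) charts, using that $\pi$ is log \'{e}tale so that the correction term comes entirely from the divisorial part of the log structure on $\bY$ rather than from the ordinary cotangent sheaf.
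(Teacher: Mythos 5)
Your overall strategy --- redoing Musta{\c{t}}{\u{a}}'s argument intrinsically on $\bX$ via a log change-of-variables formula along a log resolution $\pi\colon\bY\to\bX$ --- is genuinely different from the paper's, but the step carrying all of the content is exactly the one you leave open. The paper never compares $J_m(\bY)$ with $J_m(\bX)$ for a resolution of $\bX$. Instead it stratifies $X=\cup_l X_l$ by the log structure, shows that $J_m(\bX)_l$ is an $\bfA^{ml}$-bundle over the \emph{ordinary} jet scheme $J_m(X_l)$, and thereby converts irreducibility of $J_m(\bX)$ into the dimension bounds $\dim J_m(X_l)\le (m+1)(d-l)$, which by \cite{Mustata2} say that $(Y,(c+l)X_l)$ is log canonical in the smooth ambient toric chart; a multiplier-ideal argument (Lemma~\ref{lem-mustata}, via Jow--Miller) identifies this with log canonicity of $(Y,cX+D_Y)$ near $X_l$, and inversion of adjunction brings it back to $(X,D_X)$. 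Your third step replaces all of this with ``a log analogue of the motivic change-of-variables estimate,'' which you acknowledge is not set up. That is not a routine adaptation: even in the non-log case Musta{\c{t}}{\u{a}} does not integrate over a resolution of the singular $X$ (the change-of-variables formula needs a smooth source and a delicate Jacobian correction when the target is singular); he works with contact loci in the smooth ambient space. So the proposal is missing its central lemma, and the missing lemma is where the theorem lives.

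There is also a concrete error that would sink the approach as written: a log resolution $\pi\colon\bY\to\bX$ of a log-singular $\bX$ is \emph{not} log \'etale. If it were, the induced maps on fibres of log jet schemes would be isomorphisms --- exactly as the paper proves for the genuinely log \'etale toric resolution $\tilde{\bY}\to\bY$ of the ambient space --- and $J_m(\bX)$ would then always be irreducible as the image of the irreducible $J_m(\bY)$, making your criterion vacuous and the theorem false. The log discrepancies you want to read off are detected precisely by the failure of $\pi$ to be log crepant, so the ``correction term'' cannot come for free from log \'etaleness. A smaller point: irreducibility of $J_m(\bX)$ is not equivalent to all components having dimension $(m+1)n$; one needs the locus of jets over the closed strata to have dimension \emph{strictly} less than $(m+1)n$, which is how the paper states inequality~(\ref{eq-irred}).
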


We need to explain the notion of a normal, complete intersection log variety and when such a variety has canonical singularities. A log variety $\bX$ is a local complete intersection log variety if locally it is a complete intersection in a toric variety $Y$, with the log structure $\cM_X$ being the restriction of the standard log structure $\cM_Y$ on $Y$. We also require that $X$ intersect the torus orbits of $Y$ in proper dimension. The log variety $\bX$ is normal if the underlying variety $X$ is normal.

To define canonical singularities, we need the canonical divisor $K_\bX$ of a log variety $\bX$. This is defined more generally below, but for complete intersections in a toric variety it can be described as follows. Let $T\subset Y$ be a toric variety with its standard log structure. Then $K_\bY := K_Y+D_Y$, where $K_Y$ is the canonical divisor of $Y$ and $D_Y$ is the reduced divisor $Y\setmin T$. The divisor $K_\bY$ is Cartier (in fact, it is trivial).  If $\bX$ is a normal, complete intersection in $\bY$, we define $D_X= D_Y|_X$ and the canonical divisor $K_\bX = K_X+D_X$. By adjunction, this divisor is again Cartier. 

Now consider a morphism of log varieties $f: \tilde{\bX} \to \bX$, such that the underlying morphism of varieties $f: \tilde{X}\to X$ is proper birational. The discrepancy divisor of $f$ is defined as the difference $K_{\tilde{\bX}} - f^*(K_\bX)$. The discrepancy divisor is required to equal $D_{\tilde{X}}-f^*(D_X)$ away from the exceptional locus. We say that the log variety $\bX$ has canonical singularities if the discrepancy divisor is effective for every proper birational log morphism $f$. 

We will show that the log variety $\bX$ is canonical if and only if the pair $(X,D_X)$ is log canonical near $D_X$ and $X\setmin D_X$ is canonical. This implies that to determine if $\bX$ is canonical, it suffices to consider one resolution $f:\tilde{\bX}\to \bX$. Similarly, we show that the irreducibility of the log jet scheme $J_m(\bX)$ can be expressed in terms of irreducibility and dimension of ordinary jet schemes. This reduces the proof of Theorem~\ref{thm-main} to the cases considered in \cite{Mustata1,Mustata2}.

In the theorem we require $\bX$ to be a normal log variety. In particular, all toric varieties are assumed to be normal. This is used in the proof to simplify the treatment of Cartier divisors on $X$. In Kato's theory of log schemes \cite{Kato}, normality does not play a large role. For example, log smooth varieties are usually not  normal. It may be possible to extend the theorem to the non-normal case.

The requirement of the theorem that $\bX$ is a local complete intersection is essential. Similarly to the case of ordinary jets, it implies for example that the jet scheme $J_m(\bX)$ has no components of small dimension.

{\bf Acknowledgements.} We thank Mircea Musta{\c{t}}{\u{a}} for his help with the statement and proof of the key lemma in Section~\ref{sec:mus}.

\section{Log varieties}

We work over an algebraically closed field $k$ of characteristic zero. The scheme-theoretic notation follows \cite{Hartshorne} and for log schemes we refer to \cite{Kato}. All sheaves are taken in the Zariski topology.

\subsection{Basics about log varieties}

A semi-group is a set with a binary, associative operation. A {\em monoid} is a unitary semi-group; it has a unique identity element. We only consider commutative monoids, and every homomorphism of monoids is required to preserve the identity. Let $P = (P,+)$ denote a monoid throughout. Let $X = (X, \cO_X)$ be a scheme. Under multiplication, $\cO_X$ forms a sheaf of (commutative) monoids. A {\em pre-log structure (pre-log str)} on $X$ is the supplemental data of a sheaf $\cM_X$ of (commutative) monoids on $X$ and a homomorphism of sheaves of monoids $\alpha_X : \cM_X \to \cO_X.$ We usually refer only to ``the pre-log str $\cM_X,$'' suppressing $\alpha_X,$ and to $\alpha$ where no ambiguity arises. A morphism of pre-log structures is a morphism of sheaves of monoids $\cM_X\to \cM_X'$ compatible with the maps to $\cO_X$.

A {\em pre-log scheme}  is a scheme endowed with a pre-log str. Pre-log schemes are denoted in the form $\bX = (X,\cM_X).$ A pre-log scheme $\bX$ such that $X$ is a variety is called a {\em pre-log variety.} 

Let $\bX = (X,\cM_X)$ and $\bY = (Y,\cM_Y)$ be pre-log schemes. A morphism (or {\em log morphism}) from $\bX$ to $\bY$ is a pair of data given by a morphism $f: X \to Y$ and a morphism of pre-log structures $f^{\flat}: \cM_Y \to f_* \cM_X$. Such a morphism is denoted by $(f,f^{\flat})$ or simply $f.$

A pre-log str $\cM_X$ on $X$ is called a {\em log structure (log str)} on $X$ if restricting $\alpha$ induces an isomorphism $\alpha^{-1} \cO_X^* \tilde{\to} \cO_X^*.$ A {\em log scheme} is a scheme endowed with a log str. If $\bX$ is both a pre-log variety and a log scheme, we call $\bX$ a {\em log variety.} A morphism of log schemes is a morphism of the underlying pre-log schemes.

Associated to any pre-log str on $X$ is a natural log str $\cM_X^a$  on $X$ formed by the pushout of the homomorphisms $\alpha^{-1} \cO_X^* \to \cO_X^*$ and $\alpha^{-1} \cO_X^* \hookrightarrow \cM_X$: 
\[\begin{CD} 
\cM_X^a @<<< \cM_X \\
@AAA @AAA \\
\cO_X^* @<<< \alpha^{-1} \cO_X^* .
\end{CD}\]
The log str $\cM_X^a$ is called the {\em log str associated to} $\cM_X.$ This log str is universal for homomorphisms from $\cM_X$ to log structures on $X.$

\begin{example} 
The {\em trivial} log str on $X$ is defined by $\cM_X = \cO_X^*$ and the inclusion morphism $\alpha: \cM_X\to \cO_X.$ This log str is initial in the category of all log structures on $X$. For any log scheme $\bY$, a morphism of log schemes $\bY\to \bX$ is equivalent to a morphism of schemes $Y\to X.$

Similarly, the log structure given by $\cM_X = \cO_X$ and the identity morphism $\alpha: \cM_X\to \cO_X$ is final in the category of all log structures on $X$. In this case, for any log scheme $\bY$, a morphism of log schemes $\bX\to \bY$ is equivalent to a morphism of schemes $X\to Y.$
\end{example}

Let $X$ be any scheme, $\bY$ a pre-log scheme, and $f: X \to Y$ a morphism of schemes. The inverse image sheaf $f^{-1}\cM_Y$ together with the composition $f^{-1}\cM_Y \to f^{-1}\cO_Y \to \cO_X$ defines a pre-log str on $X.$ Further, if $\bY$ is a log scheme, the {\em inverse image log str} is defined to be $(f^{-1}\cM_Y)^a$ and is denoted $f^*\cM_Y.$ The inverse image log str is well-behaved; Kato notes that if $\bY$ is a pre-log scheme, then $(f^{-1}\cM_Y)^a \isom f^*\cM_Y^a$ (cf.~ \cite{Kato}). When $f: X\to Y$ is an open or closed embedding, we may denote $f^*\cM_Y$ by $\cM_Y|_X$.

On the other hand, if $\bX$ is a log scheme, Y is any scheme, and $f: X \to Y$ is a morphism of schemes, then the {\em direct image log str} is the fibre product of the morphisms $\cO_Y \to f_* \cO_X$ and $f_* \cM_X \to f_* \cO_X$ (with the obvious structure morphism), denoted by $f_*^{log} \cM_X.$

\begin{example} 
Let $\jmath :U \hookrightarrow X,$ be an open embedding, give $U$ the trivial log str $\cM_U = \cO_U^*$ and let $\cM_X =  \jmath_*^{log} \cM_U$. The local sections of $\cM_X$ are all the regular functions that are invertible on $U.$ We call this $\cM_X$ the {\em log str associated to the open embedding $U\subset X$.}
\end{example}

In practice we only work with certain well-behaved (pre-)log structures. We will need some notions about monoids in order to introduce them. 

Every monoid $P$ contains its {\em  group of units} $P^*;$ the quotient monoid $P/P^*$ is denoted $\overline P.$ Concretely, $\overline P$ is the set of cosets $\{ p+P^* : p\in P \}$ with the natural operation. 

To every monoid $P$ there is associated a group $P^{gp},$ the {\em groupification} of $P,$ defined by the obvious universal property for homomorphisms from $P$ to arbitrary abelian groups. Let $p,p'$ and $n,n'$ denote elements in $P.$ The group $P^{gp}$ is concretely described as the quotient of the product (of monoids) $P \times P$ by the equivalence 
\[ (p,n) \sim (p',n') \iff \exists m \in P : p+n'+m = p'+n+m. \]

If every equality $p+n = p+n'$ in $P$ implies $n = n',$ then $P$ is called {\em integral}. Alternatively, $P$ is integral if the natural homomorphism $P \to P^{gp}$ taking $p \mapsto (p,0)$ is injective. A finitely generated and integral monoid is called {\em fine}. $P$ is called {\em saturated} if for any $p\in P^{gp}$ such that $m p = p+p+\cdots + p$ lies in $P$ for some integer $m>0$,  we have that $p\in P$. 

These notions carry over to sheaves of monoids.
Let the subsheaf of units of a sheaf of monoids $\cM_X$ on $X$ be denoted $\cM_X^*.$ We also denote the sheaf of monoids $\cM_X/\cM_X^*$ by $\overline{\cM}_X,$ and the groupification of  $\cM_X$ by $\cM_X^{gp}.$

A monoid $P$ and a homomorphism of monoids $P \to \cO_X(X)$ determines the constant sheaf of monoids $P_X$ on $X$, the pre-log str $\alpha: P_X \to \cO_X$, and the associated log str $P_X^a\to \cO_X$. A log structure $\cM_X$ is {\em quasi-coherent} if at any point of $X$ there is a monoid $P$ such that locally $\cM_X$ is isomorphic to $P_X^a.$ More precisely, for any point $x \in X,$ the pair of data comprised of an open neighbourhood of $x$, say $\jmath : U \hookrightarrow X,$ and a monoid $P$ with a homomorphism $P \to \cO_X(U)$ together defines a {\em chart of $\cM_X$ at $x$} if $\jmath^* \cM_X \isom P_U^a$ as log structures on $U.$ A {\em quasi-coherent} log scheme carries a quasi-coherent log str, or equivalently, admits a chart at every point. A log scheme is {\em coherent} (resp.~ {\em integral, fine, saturated}) if every point admits a chart $(U,P)$ with finitely generated (resp.~ integral, fine, saturated) $P.$ We are mostly interested in fine and saturated log varieties. If the log structure $\cM_X$ is fine and saturated, then every point $x\in X$ has a chart $(U,P)$ such that the natural map induces an isomorphism $P \isom \overline\cM_{X,x}$ (Lemma~1.6 in \cite{Kato-toric}).

\begin{example} 
Let $X$ be a variety, $D\subset X$ a divisor and $V=X\setmin D$. The log str on $X$ associated to the open embedding $V\subset X$ is fine and saturated. To construct a chart $(U,P)$ at $x\in X$, let $P$ be the monoid of effective Cartier divisors near $x$ that are supported on $D$. This is an integral, finitely generated and saturated monoid. Choose local equations for elements in a basis of $P^{gp}$, defined on an open neighborhood $U$ of $x$.  Monomials in these local equations will give local equations of elements of $P$ and hence a morphism of monoids $P\to \cO_X(U)$. 
\end{example}

\subsection{The monoid algebra}

Given a monoid $P$, the {\em monoid algebra} $k[P]$ is the set of finite formal sums $\{ \sum a_p \chi^p : p \in P, a_p\in k \}$ with formal addition, and multiplication induced by the monoid operation (namely $\chi^p \cdot \chi^{p'} = \chi^{p+p'}$). One may consider the constant sheaf $P_X$ on the space $X = \Spec k[P];$ it yields a natural pre-log str on $X$ arising from the canonical homomorphism $P \hookrightarrow k[P].$ Clearly the log scheme $\bX = (\Spec k[P], P_X^a)$ is quasi-coherent, and is integral, fine, or saturated if and only if the monoid $P$ is integral, fine or saturated. We call $P_X^a$ the {\em standard} log str on the scheme $\Spec k[P]$. When $P$ is a fine and saturated monoid, then $X=\Spec k[P]$ is an affine toric variety (cf.~ \cite{Fulton}). The standard log str on $X$ is the same as the log str associated to the open embedding of the big torus $T\subset X$.

Let $(X,\cM_X)$ be a log scheme.  To give a chart $(U,P)$ at $x\in X$ is the same as to give a morphism of log schemes $(U,\cM_X|_U) \to (Y=\Spec k[P], P_Y^a)$, such that the pullback of $P_Y^a$ is isomorphic to  $\cM_X|_U$. If the log str $\cM_X$ is fine and saturated, we can choose $P$ to be fine and saturated. Moreover, if $X$ is of finite type, we can make $P$ bigger if necessary (by adding units from $\cO^*_U$) and assume that $U\to \Spec k[P]$ is a closed embedding. Thus, a fine and saturated log variety is locally a closed subvariety of a toric variety, with the log str $\cM_X$ being the restriction of the standard log str on the toric variety. 

Generalizing the previous argument slightly, let $Y=\Spec R$ be an affine scheme with log str associated to the morphism $P\to R$ for a monoid $P$. If $\bX$ is any log scheme, then 
a morphism of log schemes $\bX\to \bY$ is determined by a commutative diagram
\[\begin{CD} 
\cM_X(X) @<<< P \\
@VVV @VVV \\
\cO_X(X) @<<< R.
\end{CD}\]
If $R=k[P]$ is the monoid algebra of $P$ and $Y$ has the standard log str, then the diagram is determined solely by the homomorphism of monoids $P \to \cM_X(X).$

\subsection{Stratification of  log varieties}

Let $\bX = (X,\cM_X)$ be a fine and saturated log variety, $\overline{\cM}_{X,x}$ the stalk of the sheaf of monoids $\overline{\cM}_{X}$ at a point $x\in X$, and $(\overline{\cM}_{X,x})^{gp}$ its associated group. This is a finitely generated free abelian group; let $r(x)$ be its rank. The function $r: X\to \ZZp$ divides $X$ into a disjoint union:
\[ X = \cup_i X_i, \quad X_i = r^{-1}(i).\]
Each locally closed set $X_i$ can be given the structure of a locally closed subscheme of $X$ as follows. Near a point $x \in X_i$ the ideal of $X_i$ in $\cO_X$ is generated by $\alpha(\cM_X\setmin \cM_X^\times)$.  

%% new example
\begin{example}
When $Y$ is a toric variety with its standard log str, the stratum $Y_i$ is the union of codimension $i$ torus orbits with reduced scheme structure. A chart $(U,P)$ of a fine and saturated log scheme $\bX$ defines a morphism of schemes $U\to Y=\Spec k[P]$. The strata in $U$ are the inverse images of the strata in $Y$.  
\end{example}

\begin{assumption}
\label{ass-strat} We will assume that $X_i$ has codimension $i$ in $X$ for every $i$.
\end{assumption}

The codimension of $X_i$ cannot be greater than $i$. If the codimension is less than $i$, but $X_0$ is non-empty, then the log jet schemes of $\bX$ are reducible (see Example~\ref{ex-codim} below). With some extra work it may be possible to define the canonical divisor $K_\bX$ for such varieties and prove that they do not have canonical singularities. For simplicity, we will require the assumption.

\subsection{Local complete intersection log varieties}

Recall that a fine and saturated log variety  $\bX=(X,\cM_X)$ is locally a closed subset of a toric variety $Y=\Spec k[P]$, such that the log str $\cM_X$ is the restriction of the standard log str on $Y$.  We say that $\bX$ is a local complete intersection log variety if the closed embedding can be chosen to be a complete intersection. More precisely:

\begin{definition} A log variety $\bX$ is a {\em local complete intersection} log variety if it satisfies Assumption~\ref{ass-strat} and every point $x\in X$ has a chart $(U,P)$ with $P$ a fine and saturated monoid such that $U\to Y=\Spec k[P]$ is a closed embedding whose image is a complete intersection in $Y$.
\end{definition}

Note that a local complete intersection log variety is by definition fine and saturated. Assumption~\ref{ass-strat} means that the closed set $U\subset Y$ intersects all torus orbits in proper dimension.

\begin{example} Let $X$ be a non-singular variety, $D\subset X$ a reduced divisor. Let the log str $\cM_X$ be associated to the open embedding $X\setmin D \subset X$. Then $\bX$ is a local complete intersection log variety. Near $x\in X$, let $f_1,\ldots, f_m$ define the components of $D$. Then locally $X$ is isomorphic to the graph of $f_1 \times \cdots \times f_m$ in $Y= X\times \Spec k[z_1,\ldots,z_m]$. Give $Y$ the log str associated to the embedding $\{z_1\cdots z_m\neq 0\}\subset Y$. Then $X$ is a complete intersection in $Y$ and the log str on $X$ is the restriction of the log str on $Y$. The variety $Y$ is not in general toric. However, since $Y$ is non-singular, locally $Y$ admits an \'etale morphism to a toric variety $Z$, with log str on $Y$ pulled back from $Z$. This means that locally $Y$ can be embedded as a hypersurface in the toric variety $Z\times k^*$ so that the morphism to $Z$ is the projection.
\end{example}

\subsection{Log varieties with canonical singularities}

We say that a log variety $\bX$ is normal if the underlying variety $X$ is normal. 
Consider normal, fine and saturated log varieties satisfying Assumption~\ref{ass-strat}. 

The {\em canonical divisor} of a log variety $\bX$ is 
\[ K_\bX = K_X +D_X,\]
where $K_X$ is the canonical divisor of the variety $X$ and $D_X$ is an effective Weil divisor defined as follows. Let $x\in X$ be the generic point of an irreducible  divisor $D$. If the ideal generated by $\alpha(\cM_X\setmin \cM_X^\times)$ vanishes to order $a$ at $x$, then $D$ appears with coefficient $a$ in $D_X$.

Since $\bX$ satisfies Assumption~\ref{ass-strat}, the support of the divisor $D_X$ is the closure of the stratum $X_1$. The coefficient of $D$ in $D_X$ is equal to the length of the scheme $X_1$ at the generic point of $D$. 

If the log str $\cM_X$ is associated to an open embedding $U\subset X$, then $D_X=X\setmin U$ is a reduced divisor. In particular, if $Y$ is a toric variety with its standard log str, then $D_Y=Y\setmin T$ and hence $K_{\bY} = 0$.  From the adjunction formula we get that $K_\bX$ is a Cartier divisor for any local complete intersection log variety $\bX$.

A morphism of log varieties $f: \bY\to \bX$ is called {\em proper birational} if the underlying morphism of varieties $f: Y\to X$ is proper birational. Assume that $K_\bX$ is Cartier,  $f$ proper birational, and let
\begin{equation} K_\bY = f^* K_\bX +\sum a_i E_i,  \label{eq-ai} \end{equation}
where $E_i$ are irreducible divisors on $Y$; to make the coefficients $a_i$ unique, we require that if $E_i$ is not exceptional, then $a_i$ is the coefficient of $E_i$ in $D_Y-f^*(D_X)$. The log variety $\bX$ has {\em canonical singularities} (or simply, is {\em canonical}) if the {\em total discrepancy divisor} $\sum a_i E_i$ is effective for every proper birational $f: \bY\to \bX$.

Let us recall singularities of pairs (cf.~ \cite{Kollar}). A pair $(X,D)$ consists of a normal variety $X$ and a divisor $D$ in $X$, such that $K_X+D$ is Cartier. For a proper birational morphism $Y\to X$, let
\begin{equation} K_Y = f^* (K_X+D) +\sum b_i E_i,\label{eq-bi} \end{equation}
where  $E_i$ are irreducible divisors on $Y$. One assumes that if $E_i$ is not exceptional, then $b_i$ is the coefficient of $E_i$ in $-f^*(D)$.  The divisor $\sum b_i E_i$ is called the total discrepancy divisor of $f$. The pair $(X,D)$ is log canonical (resp.~ canonical) if the total discrepancy divisor has coefficients $b_i \geq -1$ (resp.~ $b_i\geq 0$). When $D=0$, then the variety $X$ is canonical if $b_i\geq 0$ for all $i$.

\begin{proposition}
A log variety $\bX$ is canonical if and only if the pair $(X,D_X)$ is log canonical and $X\setmin D_X$ is canonical. 
\end{proposition}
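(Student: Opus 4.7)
The strategy is to compare the log discrepancy of $\bX$ to the ordinary pair discrepancy of $(X, D_X)$ via the identity
\[
K_\bY - f^* K_\bX \;=\; \bigl(K_Y - f^*(K_X + D_X)\bigr) + D_Y.
\]
Extracting coefficients of each prime divisor $E_i \subset Y$ yields $a_i = b_i + d_i$, where $a_i$ and $b_i$ are the log and pair discrepancy coefficients of $E_i$ and $d_i := \operatorname{mult}_{E_i}(D_Y) \geq 0$; this holds both for exceptional and non-exceptional $E_i$ once one unfolds the two conventions. The key preliminary observation is that any log morphism $f : \bY \to \bX$ satisfies $f^{-1}(D_X) \subseteq D_Y$ as closed subsets: if $y \in Y \setminus D_Y$ then $\cM_{Y,y} = \cO_{Y,y}^*$, so for any $m \in \cM_{X, f(y)}$ the compatibility $\alpha(f^\flat m) = f^*\alpha(m)$ forces $\alpha(m)$ to be a unit at $f(y)$, giving $f(y) \in X \setminus D_X$. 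Combined with Assumption~\ref{ass-strat} applied to $\bY$ (the generic point of any prime divisor contained in $D_Y$ must lie in $Y_1$, since $Y_k$ for $k \geq 2$ has codimension $\geq 2$), this sharpens to $E_i \subseteq f^{-1}(D_X) \implies d_i \geq 1$.

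For ($\Leftarrow$), assume $(X, D_X)$ is log canonical and $X \setminus D_X$ is canonical. Fix any proper birational log morphism $f : \bY \to \bX$ and a prime divisor $E_i \subset Y$. If $E_i \subseteq f^{-1}(D_X)$, then $d_i \geq 1$ by the sharpened observation and $b_i \geq -1$ by log-canonicity of the pair, so $a_i \geq 0$. Otherwise $E_i$ maps generically into $X \setminus D_X$, and $b_i \geq 0$ either by canonicity of $X \setminus D_X$ (when $E_i$ is exceptional) or by the direct computation $b_i = 0$ (when $E_i$ is the strict transform of a prime divisor not meeting $D_X$); in both sub-cases $a_i = b_i + d_i \geq 0$.

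For ($\Rightarrow$), assume $\bX$ is canonical. Starting from any log resolution $f : Y \to X$, endow $Y$ with the log structure associated to the open embedding $f^{-1}(X \setminus D_X) \hookrightarrow Y$. Then $\bY$ is fine and saturated, Assumption~\ref{ass-strat} holds on $\bY$ (the preimage is an SNC divisor), and a one-line local computation at the generic points of $f^{-1}(D_X)$ gives $D_Y = f^{-1}(D_X)_{\mathrm{red}}$. Thus $d_i \in \{0,1\}$, with $d_i = 1$ iff $E_i \subseteq f^{-1}(D_X)$. The hypothesis $a_i \geq 0$ rewrites as $b_i \geq -d_i$, which reads as $b_i \geq -1$ everywhere (so $(X, D_X)$ is log canonical) and $b_i \geq 0$ for every $E_i$ mapping into $X \setminus D_X$ (so $X \setminus D_X$ is canonical). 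The main subtlety is the \emph{automatic multiplicity one} packaged into $d_i \geq 1$, which relies on Assumption~\ref{ass-strat} and is precisely what lets the boundary $D_Y$ absorb a single unit of discrepancy, shifting the canonicity threshold on $\bX$ to a log-canonicity threshold on $(X, D_X)$.
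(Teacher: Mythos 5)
Your proof is correct and follows essentially the same route as the paper: both hinge on the coefficient identity $a_i = b_i + \operatorname{mult}_{E_i}(D_Y)$ together with the observation that components of $f^{-1}(D_X)$ appear in $D_Y$ with multiplicity at least one (exactly one on the reduced models used for the converse). The only organizational difference is that the paper first reduces to test morphisms $\bY\to\bX$ whose log structure comes from the open embedding $Y\setminus f^{-1}(D_X)\subset Y$ (via the universal map $\bY'\to\bY$, which can only decrease discrepancies), whereas you handle arbitrary $\bY$ directly using $d_i\geq 1$; the content is the same.
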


\begin{proof}
Let $\bY=(Y,\cM_Y)$ be a log variety. Define $\bY'= (Y,\cM_Y')$, where $\cM_Y'$ is the log str associated to the open embedding $Y\setmin D_Y \subset Y$. Then the image of $\alpha: \cM_Y \to \cO_Y$ lies in the submonoid $\cM_{Y'} \subset \cO_Y$, hence the identity morphism of $Y$ extends uniquely to a log morphism $\bY' \to \bY$.   If $f:\bY\to \bX$ is a proper birational morphism, we can compose this with $\bY'\to \bY$. After such a composition, the discrepancy divisor will not increase. Thus, to check if $\bX$ is canonical, it suffices to consider proper birational morphisms from $\bY$, where the log str $\cM_Y$ is associated to an open embedding $U\subset Y$. By the same argument, it suffices to take $U= Y\setmin f^{-1}(D_X)$. In this case $D_Y = f^{-1}(D_X)$ with reduced structure. Now comparing the discrepancy divisors for $\bX$ and $(X,D_X)$ as in the formulas $(\ref{eq-ai})$ and  $(\ref{eq-bi})$ above, we see that $a_i = b_i+1$ if $E_i$ maps to $D_X$, and $a_i=b_i$ otherwise. This implies the statement of the proposition.
\end{proof}

As in the case of ordinary varieties, having canonical singularities can be checked on one resolution. We say that $f:(Y,D_Y)\to (X,D_X)$ is a log resolution of the pair $(X,D_X)$ if $Y$ is non-singular, $f^{-1}(D_X)$ is a divisor, $D_Y=f^{-1}(D_X)^{red}$ is the divisor with reduced structure, and $D_Y$ together with the exceptional locus of $f$ is a divisor of simple normal crossings.
 
\begin{corollary} Let $\bX$ be a log variety such that $K_\bX$ is Cartier, and $f:(Y,D_Y)\to (X,D_X)$ a log resolution. Then $\bX$ has canonical singularities if and only if the divisor
\[ K_Y+D_Y- f^*(K_X+D_X) \]
is effective. \qed
\end{corollary}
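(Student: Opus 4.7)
The plan is to apply the preceding proposition to reduce the corollary to the analogous one-resolution criterion for pairs. By the proposition, $\bX$ is canonical if and only if $(X,D_X)$ is log canonical (on all of $X$) and $X\setmin D_X$ is canonical as an ordinary variety. Both of these are classical conditions that can be checked on a single log resolution (cf.~\cite{Kollar}), so the only real work is to identify the divisor $K_Y+D_Y-f^*(K_X+D_X)$ with the combined discrepancy divisor and match effectivity with the two classical conditions.

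Concretely, I would decompose the prime divisors on $Y$ into four types: strict transforms $F_i$ of components of $D_X$, strict transforms $G_j$ of divisors of $X$ not contained in $D_X$, exceptional divisors $E_k^D$ over $D_X$, and exceptional divisors $E_k^0$ over $X\setmin D_X$. Since $f$ is a log resolution, $D_Y=\sum F_i+\sum E_k^D$ with reduced coefficients. Writing $K_Y-f^*(K_X+D_X)=\sum b_\ell E_\ell$ in the sense of (\ref{eq-bi}), we have $b=-1$ on the $F_i$ and $b=0$ on the $G_j$ by the normalization convention, while on the $E_k^D$ and $E_k^0$ the number $b$ is the usual log discrepancy of the pair $(X,D_X)$. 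Adding $D_Y$ gives the identity
\[ K_Y+D_Y-f^*(K_X+D_X)=\sum a_\ell E_\ell,\]
where $a_\ell=b_\ell+\mathrm{mult}_{E_\ell}D_Y$. Hence $a=0$ on every $F_i$ and every $G_j$; $a=b+1$ on every $E_k^D$; and $a=b$ on every $E_k^0$.

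Effectivity of $K_Y+D_Y-f^*(K_X+D_X)$ therefore amounts to the two conditions $b\geq -1$ on all exceptional divisors over $D_X$ and $b\geq 0$ on all exceptional divisors over $X\setmin D_X$. The first condition, combined with the second (which implies $b\geq -1$ there as well) and the automatic estimates on strict transforms, is exactly the statement that the pair $(X,D_X)$ is log canonical; the second condition says precisely that $X\setmin D_X$ is canonical as a variety, since away from $D_X$ the discrepancy $b$ reduces to the ordinary discrepancy of $X$. Both conditions are well known to be independent of the chosen log resolution, so effectivity on one log resolution is equivalent to the combined condition, which by the proposition is equivalent to $\bX$ being canonical.

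The main obstacle is keeping the bookkeeping of coefficients straight, in particular verifying that the shift $a=b+1$ on the exceptional divisors over $D_X$ converts the log canonical bound $b\geq -1$ into the canonical bound $a\geq 0$, while on the exceptional divisors over $X\setmin D_X$ the shift is absent so that the canonical bound is exactly what the proposition requires there. Once this bookkeeping is done, the corollary follows immediately from the proposition and the standard one-resolution criterion for singularities of pairs.
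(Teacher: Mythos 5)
Your proof is correct and follows exactly the route the paper intends: the corollary is stated with an immediate \qed because it is meant to follow from the preceding proposition together with the standard fact that log canonicity of $(X,D_X)$ and canonicity of $X\setminus D_X$ can each be tested on a single log resolution, and your coefficient bookkeeping ($a=b+1$ on divisors in $D_Y$, $a=b$ off $D_Y$, $a=0$ on strict transforms) is precisely the computation carried out in the proof of the proposition. The only implicit assumption worth flagging is that $D_X$ is reduced (so that $b=-1$ on the strict transforms $F_i$), which holds in the paper's setting since $\cM_X$ is associated to an open embedding there.
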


\section{Log jet schemes}

Let $\bj_m$ be the scheme $j_m = \Spec k[t]/(t^{m+1})$ with its trivial log str. For any log scheme $\bS$ the product $\bS \times \bj_m$ exists in the category of schemes with log structures, as described in \cite{Kato}. Concretely, since $j_m$ is a one point space, $S$ and $S\times j_m$ are homeomorphic. Since $j_m$ has the trivial log str, the log str on $S\times j_m$ is the log str associated to the pre-log str $\alpha_S: \cM_S\to \cO_{S\times j_m}$. This is defined by the pushout diagram:
\[\begin{CD} 
\cM_{S\times j_m} @<<< \cM_S \\
@AAA @AAA \\
\cO_{S\times j_m}^* @<<< \alpha_S^{-1} \cO_{S\times j_m}^*.
\end{CD}\]
Note that $\alpha_S^{-1} \cO_{S\times j_m}^* = \cO_S^*$, and 
\[  \cO_{S\times j_m}^* = \cO_S^* \times \cG,\]
where $\cG$ is the sheaf of multiplicative groups 
\[ \cG = \{1+a_1t+ \cdots a_mt^m : a_i \in \cO_S  \}.\]
It follows that $\cM_{S\times j_m}$ is the product of sheaves $\cM_S \times \cG$, with $\alpha: \cM_S \times \cG \to \cO_{S\times j_m}$ mapping
\[ \alpha: (m, 1+a_1t+ \cdots a_mt^m) \mapsto \alpha_S(m)\cdot (1+a_1t+ \cdots a_mt^m).\]

Let $\bX$ be a fine and saturated log scheme. The $m^{\rm th}$ {\em log jet scheme} of $\bX,$ which we denote by $\bJ_m(\bX) = (J_m(\bX), \cM_{J_m(\bX)}),$ represents the functor 
\[ \bS \mapsto \Hom_{\logsch}(\bS\times \bj_m, \bX)\]
 from the category of log schemes to that of sets. The existence of log jet schemes is proved in \cite{Dutter}.

It is well-known that if $X\subset \bfA^n$ is a closed subscheme defined by $l$ equations, then the jet scheme $J_m(X)\subset J_m(\bfA^n) \isom \bfA^{(m+1)n}$ is a closed subscheme defined by $(m+1)l$ equations.  In the next subsections we first recall the local equations for the ordinary jet scheme and then generalize this to the case of log jet schemes.

\subsection{Equations defining the ordinary jet scheme}

Let $Y = \bfA^n = \Spec k[x_1,\ldots,x_n]$. Then 
\[ J_m(Y) = \Spec k[x_i^{(j)}]_{i=1,\ldots,n; j=0,\ldots,m}.\]
 Let $S$ be a scheme, and $R=\cO_S(S)$. A morphism $S \to J_m(Y)$ given by $x_i^{(j)} \mapsto r_i^{(j)} \in R$ corresponds to the $S$-valued jet $S\times j_m \to Y$ given by 
\begin{equation} \label{eq-jet} x_i \mapsto \sum_j r_i^{(j)} \frac{t^j}{j!}.\end{equation}

The coordinate ring of $J_m(Y)$ has a $k$-linear differential operator $d: k[x_i^{(j)}] \to k[x_i^{(j)}]$ that maps $ x_i^{(j)} \mapsto  x_i^{(j+1)}$, $ x_i^{(m)} \mapsto 0$, and satisfies the Leibniz rule. If $f(x_1,\ldots,x_n) \in k[x_1,\ldots,x_n]$, then 
\begin{equation}\label{eq-exp} f(\sum_j x_1^{(j)} \frac{t^j}{j!}, \ldots , \sum_j x_n^{(j)} \frac{t^j}{j!}) = \sum_j d^j f \frac{t^j}{j!}.\end{equation}
It follows from this that if  $X\subset Y$ is the closed subscheme $X=V(f_1,\ldots,f_l)$, then $J_m(X)\subset J_m(Y)$ is also a closed subscheme defined by 
\[ J_m(X) = V(d^j f_i)_{i=1,\ldots,l; j=0,\ldots,m}.\]

For later use we mention that the operator $d$ extends to the ring $k[x_1^{\pm 1}, \ldots, x_n^{\pm 1}][x_i^{(j)}]$ and formula (\ref{eq-exp}) also holds for Laurent polynomials $f\in k[x_1^{\pm 1}, \ldots, x_n^{\pm 1}]$.

\subsection{Equations defining the log jet scheme}

Now let $P\subset P^{gp} \isom \ZZ^n$ be a fine, saturated monoid, and $Y= \Spec k[P]$ a toric variety with its standard log str. Let $e_1,\ldots,e_n \in P$ be a basis for $P^{gp}$, and let $x_1,\ldots,x_n\subset k[P]$ be the corresponding monomials. Note that elements of $k[P]$ are then Laurent polynomials in $x_1,\ldots,x_n$. 

We claim that the log jet scheme of $\bY$ is
\[ J_m(\bY) = \Spec k[P][\frac{x_i^{(j)}}{x_i}]_{i=1,\ldots,n; j=1,\ldots,m},\]
with log str induced by $P\hookrightarrow k[P]$.  A morphism of log schemes $\bS\to \bJ_m(\bY)$  is a commutative diagram
\[ \begin{CD} \cM_S(S) @<<< P \\
@VVV @VVV\\
\cO_S(S) @<<< k[P][\frac{x_i^{(j)}}{x_i}].\\
\end{CD} \]
It is determined by a monoid homomorphism $P\to \cM_S(S)$ and elements $\frac{x_i^{(j)}}{x_i} \mapsto \frac{r_i^{(j)}}{r_i} \in R = \cO_S(S)$. (Note that we view $\frac{x_i^{(j)}}{x_i}$ as variables and $\frac{r_i^{(j)}}{r_i}$ as some elements in $R$, not necessarily quotients of two elements.)

An $\bS$-valued jet in $\bY$ is a morphism $\bS\times \bj_m \to \bY$, which corresponds to a commutative diagram
\[ \begin{CD} \cM_{S\times j_m}(S\times j_m) @<<< P \\
@VVV @VVV\\
R[t]/(t^{m+1}) @<<< k[P].\\
\end{CD} \]
This is determined by a homomorphism of monoids $P\to \cM_{S\times j_m}(S\times j_m)$. The monoid $\cM_{S\times j_m}(S\times j_m)$ is the product of the monoid $\cM_S(S)$ with the group  
\[ G = \{ 1+a_1t + \cdots a_m t^m| a_i\in R \} \subset (R[t]/(t^{m+1}))^*.\]
Hence the diagram above is determined by a homomorphism of monoids $P\to \cM_S(S)$ and a homomorphism of groups $P^{gp} \to G$. Let the group homomorphism be given by
\[ e_i \mapsto 1+ \sum_j \frac{r_i^{(j)}}{r_i} \frac{t^j}{j!}.\]
This gives a bijection between $\bS$-valued points of the log jet scheme and $\bS$-valued jets in $\bY$. One can easily check that this is an isomorphism of functors.

Suppose we have a morphism $\bS\to \bJ_m(\bY)$, given by $\phi: P\to \cM_S(S)$ and $\frac{x_i^{(j)}}{x_i} \mapsto \frac{r_i^{(j)}}{r_i} \in R$. Set $r_i = \alpha(\phi(e_i))\in  R$,  and $r_i^{(j)}  = r_i \cdot \frac{r_i^{(j)}}{r_i}$. Then the ordinary $S$-valued jet into $Y$ underlying the $\bS$-valued jet into $\bY$ is given by a $k$-algebra homomorphism $k[P]\to R[t]/(t^{m+1})$. It maps $x_i\in P$ to 
\[  x_i \mapsto r_i(1+ \sum_{j>0} \frac{r_i^{(j)}}{r_i} \frac{t^j}{j!}) = r_i + \sum_{j>0} r_i^{(j)} \frac{t^j}{j!}.\] 
This formula can be compared with (\ref{eq-jet}) and it is the reason for denoting the coordinates on 
the log jet scheme by $\frac{x_i^{(j)}}{x_i}$.

Later, in the proof of the main theorem, we will need to consider birational morphisms of toric varieties and  the induced morphisms of log jet schemes. Let $P\subset Q$ be two fine and saturated monoids, such that $P^{gp}=Q^{gp}$. The inclusion of monoids gives a morphism of toric varieties $f: Y'=\Spec k[Q] \to Y=\Spec k[P]$. This map is birational. With notation as above, we choose a basis $e_1,\ldots,e_n \in P$ for $P^{gp}$ and denote by $x_1,\ldots,x_n$ the corresponding monomials in both $k[P]$ and $k[Q]$. Then
\[ J_m(\bY') = \Spec k[Q][\frac{x_i^{(j)}}{x_i}], \qquad J_m(\bY) = \Spec k[P][\frac{x_i^{(j)}}{x_i}],\]
and one easily checks that the induced morphism of log jet schemes $J_m(\bY')  \to J_m(\bY)$ takes $\frac{x_i^{(j)}}{x_i}$ to $\frac{x_i^{(j)}}{x_i}$. This result can be restated by saying that the diagram
\[ \begin{CD}  J_m(\bY')  @>>> J_m(\bY)\\
@VVV @VVV\\
Y' @>>> Y\\
\end{CD} \]
is a fibre square. In fact, the diagram is also a fibre square in the category of log schemes when we give each scheme its log structure.

To study the log jet schemes of closed subschemes of $Y$, we need to define the
$k$-linear derivation $d: k[P][\frac{x_i^{(j)}}{x_i}] \to k[P][\frac{x_i^{(j)}}{x_i}]$. Note that we have an inclusion of rings 
\[ k[P][\frac{x_i^{(j)}}{x_i}]  \subset k[x_1^{\pm 1},\ldots, x_n^{\pm 1}] [x_i^{(j)}] .\]
The derivation $d$ on $k[x_1,\ldots, x_n] [x_i^{(j)}]$ defined in the  case of ordinary jet schemes extends to a derivation on the ring $k[x_1^{\pm 1},\ldots, x_n^{\pm 1}] [x_i^{(j)}]$. The subring $k[P][\frac{x_i^{(j)}}{x_i}]$ is invariant by this extension, so we define the desired derivation $d$ as the restriction of the extended $d$. Invariance of the subring under $d$ follows from:
\begin{gather*}
d(\prod_i x_i^{a_i}) =  (\prod_i x_i^{a_i} )(\sum_i a_i \frac{x_i^{(1)}}{x_i}), \\
d( \frac{x_i^{(j)}}{x_i} ) = \frac{x_i^{(j+1)} x_i - x_i^{(1)} x_i^{(j)}}{x_i^2} = \frac{x_i^{(j+1)}}{x_i}- \frac{x_i^{(1)}}{x_i} \frac{x_i^{(j)}}{x_i}.
\end{gather*}

Now let $X\subset Y$ be a closed subscheme defined by $X=V(f_1,\ldots,f_l)$. Let the log str on $X$ be the restriction of the log str on  $Y$. 

\begin{lemma} The log jet scheme $J_m(\bX)$ is a closed subscheme of $J_m(\bY)$, defined by 
\[ J_m(\bX) = V(d^j f_i)_{i=1,\ldots,l; j=0,\ldots,m}.\]
The log str on $J_m(\bX)$ is pulled back from $J_m(\bY)$.
\end{lemma}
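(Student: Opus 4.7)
The plan is to compare $\bS$-valued points on both sides via the functor of points and invoke Yoneda. The starting observation is that the embedding $\bX \hookrightarrow \bY$ is strict (the log structure on $\bX$ is by hypothesis the restriction of that on $\bY$), so a log morphism $\bS \times \bj_m \to \bY$ factors through $\bX$ if and only if its underlying scheme morphism $S \times j_m \to Y$ lands in $X = V(f_1, \ldots, f_l)$. Applied to the universal jet, this reduces the identification of $\bJ_m(\bX)$ to cutting out those points of $\bJ_m(\bY)$ whose underlying ordinary jet factors through $X$.

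By the explicit description worked out in the previous subsection, an $\bS$-valued log point of $\bJ_m(\bY)$ is specified by a homomorphism $\phi\colon P \to \cM_S(S)$ together with elements $\frac{r_i^{(j)}}{r_i} \in R$, and the underlying scheme jet is the $k$-algebra map $k[P] \to R[t]/(t^{m+1})$ sending $x_i$ to $r_i(t) := r_i + \sum_{j>0} r_i^{(j)} \frac{t^j}{j!}$, where $r_i = \alpha(\phi(e_i))$. Factoring through $X$ is therefore the condition that $f_i(r_1(t), \ldots, r_n(t)) = 0$ in $R[t]/(t^{m+1})$ for every $i$.

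The heart of the argument is to translate this vanishing into coordinates on $J_m(\bY)$ using the Taylor expansion formula (\ref{eq-exp}), which, as the paper notes, remains valid for Laurent polynomials in $k[x_1^{\pm 1}, \ldots, x_n^{\pm 1}]$. It gives
\[ f_i(r_1(t), \ldots, r_n(t)) = \sum_{j=0}^m (d^j f_i)\big|_{\text{point}} \cdot \frac{t^j}{j!}, \]
and the Leibniz-rule computations at the end of the preceding subsection guarantee that each $d^j f_i$ already lies in the subring $k[P]\bigl[\tfrac{x_i^{(j)}}{x_i}\bigr]$, so the coefficients $(d^j f_i)|_{\text{point}}$ are honest elements of $R$ obtained by substituting $x_i \mapsto r_i$ and $\tfrac{x_i^{(j)}}{x_i} \mapsto \tfrac{r_i^{(j)}}{r_i}$, without needing any of the $r_i$ to be units. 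Vanishing of $f_i(r(t))$ in $R[t]/(t^{m+1})$ is thus equivalent to the simultaneous vanishing of $(d^j f_i)|_{\text{point}}$ for $j = 0, \ldots, m$, which is precisely the defining condition of $V(d^j f_i) \subset J_m(\bY)$.

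Combining these steps and using strictness of the closed immersion $V(d^j f_i) \hookrightarrow J_m(\bY)$ for the pulled-back log structure yields a functorial bijection between $\bS$-valued log points of $V(d^j f_i)$ and log jets $\bS \times \bj_m \to \bX$, and Yoneda then identifies $\bJ_m(\bX)$ with this subscheme equipped with the restricted log structure. The main delicate point is ensuring that although the Taylor expansion is most naturally carried out formally in the larger Laurent polynomial ring $k[x_1^{\pm 1}, \ldots, x_n^{\pm 1}][x_i^{(j)}]$, the resulting equations $d^j f_i$ remain in the coordinate ring of the log jet scheme, so that they can be evaluated at arbitrary points of $J_m(\bY)$, including those whose base point lies on the toric boundary of $Y$.
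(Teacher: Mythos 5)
Your proposal is correct and takes essentially the same route as the paper's proof: both identify $\bS$-valued log jets in $\bX$ with those $\bS$-points of $J_m(\bY)$ whose induced ring homomorphism $k[P]\to R[t]/(t^{m+1})$ kills the $f_i$, expand $f_i$ via formula (\ref{eq-exp}) for Laurent polynomials, and use the fact that each $d^j f_i$ lies in $k[P][\frac{x_i^{(j)}}{x_i}]$ (with monomials $\chi^p$, $p\in P$, evaluated through the monoid homomorphism $P\to\cM_S(S)\to R$) so that the coefficients make sense without inverting the $r_i$. Your explicit emphasis on strictness of the embedding and on why the equations live in the correct subring matches the paper's key points.
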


\begin{proof} The proof is analogous to the proof in the ordinary jet case.
An $\bS$-valued log jet in $\bX$ is a commutative diagram
\[ \begin{CD} \cM_S(S)\times G @<<< P \\
@VVV @VVV\\
R[t]/(t^{m+1}) @<<< k[P]/(f_1,\ldots,f_l),\\
\end{CD} \]
where $R=\cO_S(S)$ and $G$ is as above. This is determined by a monoid homomorphism $P\to \cM_S(S)\times G$ such that the resulting ring homomorphism $\phi: k[P]\to R[t]/(t^{m+1})$ takes $f_1,\ldots,f_l$ to zero. Recall that $\phi$ maps
\[  x_i \mapsto  r_i + \sum_{j>0} r_i^{(j)} \frac{t^j}{j!}.\]
Since any $f \in k[P]$ is a Laurent polynomial in variables $x_1,\ldots,x_n$, it follows that $\phi$ takes $f$ to
\[ f(x_1^{\pm 1},\ldots, x_n^{\pm 1}) \mapsto \sum_j d^j f \frac{t^j}{j!} |_{x_i=r_i, \frac{x_i^{(j)}}{x_i} = \frac{r_i^{(j)}}{r_i}}.\]
To make sense of a monomial $\chi^p \in k[P] \subset k[x_1^{\pm 1},\ldots, x_n^{\pm 1}] $ evaluated at $x_i=r_i$, we have to define it as the image of $p$ under $P\to \cM_S(S)\to R$.  Therefore, $f$ maps to zero if and only if all $d^j f$ when evaluated at $r_i, \frac{r_i^{(j)}}{r_i}$ are zero. 

The statement of the lemma now follows.
\end{proof}

The lemma implies that if $\bX$ is a local complete intersection log variety, then every component of $J_m(\bX)$ has dimension at least $(m+1)\dim X$.

\subsection{Log jets in terms of ordinary jets}

Let $\bX$ be a fine and saturated log variety. Recall that we have a stratification  of $X$ by locally closed subschemes $X=\cup_j X_j$. Let $J_m(\bX) = \cup_j J_m(\bX)_j$, where $J_m(\bX)_j$ is the inverse image of $X_j$ under the projection $J_m(\bX) \to X$. (Note that $J_m(\bX)_j$ could also stand for the $j$-th stratum in the stratification of $J_m(\bX)$ as a log scheme. However, these two constructions agree because the log str on $J_m(\bX)$ is the  pullback of the log str on  $X$.)

\begin{lemma} $J_m(\bX)_j$ is a locally trivial $\bfA^{mj}$-bundle over $J_m(X_j)$.
\end{lemma}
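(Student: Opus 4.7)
The plan is to work Zariski-locally on $X$, reduce to an ambient toric variety via a chart, and then construct the bundle trivialization explicitly using the ``transverse'' log jet coordinates associated to the face of $P$ corresponding to the stratum.

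First, I would cover a neighborhood of each $x \in X_j$ by opens $U$ admitting a chart $(U,P)$ with $P$ fine and saturated, such that $U \hookrightarrow \bY := \Spec k[P]$ is a closed embedding and $\cM_X|_U$ is the pullback of the standard log structure. By the previous lemma and its immediate consequence for closed sub-log-varieties, $J_m(\bX)|_U \hookrightarrow J_m(\bY)$ is a closed embedding; moreover $X_j = X \cap Y_j$ scheme-theoretically, so $J_m(\bX)_j = J_m(\bX) \cap J_m(\bY)_j$ and $J_m(X_j) = J_m(X) \cap J_m(Y_j)$ inside $J_m(\bY)$, $J_m(Y)$. So it is enough to construct the bundle structure locally in the toric situation and show the equations of $X$ respect it.

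Next, fix a codim-$j$ face $F \subset P$ and an open subset of $Y_j$ in the orbit $O_F \cong T_F$. Choose a basis $e_1,\ldots,e_n$ of $P^{gp}$ adapted to $F$: the first $n-j$ spanning $F^{gp}$, the rest projecting to a basis of $P^{gp}/F^{gp}$. The log jet coordinates $y_{i,k} = x_i^{(k)}/x_i$ then split into a tangential block ($i \leq n-j$) and a transverse block ($i > n-j$), the latter giving a canonical $\bbA^{jm}$. The key computation is that for $p \in F^{gp}$, writing $p = \sum_{i \leq n-j} a_i e_i$, an induction on $k$ using $d(\chi^p) = \chi^p \sum_i a_i y_{i,1}$ together with the derivation formula $d(y_{i,l}) = y_{i,l+1} - y_{i,1} y_{i,l}$ shows $d^k(\chi^p) = \chi^p \cdot z_k(p)$ where $z_k(p)$ is a polynomial in tangential $y_{i,l}$ only.

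Now write a defining equation $f_\alpha \in I_X$ as $\sum c_p \chi^p$. Then $d^k f_\alpha = \sum c_p \chi^p z_k(p)$, and restricting to $J_m(\bY)|_{O_F}$ all summands with $p \in P \setminus F$ vanish (since $\chi^p|_{O_F} = 0$), while the surviving terms involve only tangential $y$'s. In the product decomposition
\[ J_m(\bY)|_{O_F} \;\cong\; \bigl(O_F \times \bbA^{(n-j)m}_{\mathrm{tan}}\bigr) \times \bbA^{jm}_{\mathrm{tr}}, \]
the closed subscheme $J_m(\bX)|_{O_F}$ is therefore the cylinder $Z \times \bbA^{jm}$ for some $Z \subset O_F \times \bbA^{(n-j)m}$. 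This already exhibits $J_m(\bX)_j|_{O_F}$ as a trivial $\bbA^{jm}$-bundle.

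Finally, I would identify the base: on $O_F$ the tangential $x_i$ ($i \leq n-j$) are invertible, so the relations $x_i^{(k)} = x_i\, y_{i,k}$ identify $O_F \times \bbA^{(n-j)m}$ with the ordinary jet scheme $J_m(T_F) = J_m(O_F)$, and under this identification the restricted equations cut out precisely $J_m(X \cap O_F) = J_m(X_j)|_{O_F}$. Since the orbits $O_F$ of codim $j$ give a Zariski-open cover of $Y_j$ (each is open in $Y_j$), this yields the locally trivial $\bbA^{jm}$-bundle structure on $X_j$. The main obstacle is the invariance calculation in the key computation, together with verifying that a basis of $P^{gp}$ adapted to $F$ can be chosen (possibly after refining the chart by enlarging $P$) so that the tangential generators actually lie in $F$ and the transverse ones in $P$; this is what ensures the splitting of log coordinates globally over each orbit.
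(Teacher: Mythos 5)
Your argument is essentially correct but follows a genuinely different route from the paper's. The paper never writes down coordinates on $J_m(\bY)$ at all: it takes a chart $(U,P)$ with $P\isom\overline{\cM}_{X,x}$ (so $P$ is sharp with $P^{gp}\isom\ZZ^j$), equips the test scheme $S$ with the final log structure $\cM_S=\cO_S$ so that scheme maps $S\to J_m(\bX)$ are exactly $\bS$-valued log jets in $\bX$, and then observes that for a jet based in $X_j$ every nonzero $p\in P$ must map to $0$ in $R$ (its image in $R[t]/(t^{m+1})$ has vanishing constant term while the $G$-component is a unit, so the $R$-component is forced to be $0$). Hence the datum splits as an ordinary $S$-valued jet in $X_j$ together with an arbitrary monoid homomorphism $P\to G$, i.e.\ a group homomorphism $\ZZ^j\to G\isom\bfA^m$, which gives the $\bfA^{mj}$ fibre functorially. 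Your proof instead works with the explicit presentation $k[P][x_i^{(k)}/x_i]$ and the derivation $d$, splits the coordinates into tangential and transverse blocks relative to the face $F$, and checks that the restricted equations $d^kf_\alpha|_{O_F}$ involve only tangential variables; the computation $d^k(\chi^p)=\chi^p z_k(p)$ and the identification of the base with $J_m(X\cap O_F)$ are correct, and this approach has the virtue of producing explicit equations for the base. But it hinges on the one point you flag and do not prove: the existence of a basis of $P^{gp}$ adapted to $F$ with the first $n-j$ vectors in $F$ and the rest in $P$. This is true but not free --- one needs that $F^{gp}$ is saturated in $P^{gp}$, that a full-dimensional saturated cone contains a lattice basis (via a unimodular subcone), and that the image of $P$ in $P^{gp}/F^{gp}$ is again saturated (absorbing corrections by elements deep in the relative interior of $F$). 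You should either supply that combinatorial lemma or note that the functor-of-points argument renders it unnecessary.
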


\begin{proof} Fix a point $x\in X_j$ and a chart $(U,P)$ at $x$, such that $U=\Spec A$ and  $P\isom \overline{\cM}_{X,x}$. We replace $X$ by $U$.

To study morphisms of schemes $S \to J_m(\bX)$, we give $S$ the log str $\cM_S=  \cO_S$ and consider morphisms of log schemes $\bS \to \bJ_m(\bX)$, or equivalently, $\bS$-valued log jets in $\bX$. These correspond to commutative diagrams
\[ \begin{CD} R\times G @<<< P \\
@VVV @VVV\\
R[t]/(t^{m+1}) @<<< A,\\
\end{CD} \]
where $R=\cO_S(S)$ and $G= \{ 1+a_1t + \cdots a_m t^m| a_i\in R \}$ is as above.
The stratum $X_j$ near $x$ is defined by the vanishing of all non-units in $P$. For the jet to lie in $J_m(\bX)_j$, we need that every $p\in P\setmin\{0\}$ maps to an element of the form $\sum_{i>0} r_i t^i$. This means that the monoid homomorphism $P\to R \to R[t]/(t^{m+1})$ maps  $P\setmin\{0\}$ to zero. Then also $P\to R\times G \to R[t]/(t^{m+1})$ maps $P\setmin\{0\}$ to zero.  In other words, the ideal in $A$ defining $X_j$ maps to zero, hence the underlying $S$-valued jet in $X$ is a jet in $X_j$. This gives a morphism $J_m(\bX)_j \to J_m(X_j)$.

Given an $S$-valued jet in $X_j$, the extra data needed to get an $\bS$-valued jet in $\bX$ is a monoid homomorphism $P\to G$, equivalently a group homomorphism $P^{gp} \isom \ZZ^j\to G$. This implies that $J_m(\bX)_j \isom J_m(X_j)\times \bfA^{mj}$.
\end{proof}

\begin{example} \label{ex-codim} Suppose $\bX$ does not satisfy Assumption~\ref{ass-strat} and some $X_j$ has codimension less than $j$. Then for $m$ large we have an inequality
\[ \dim J_m(\bX)_j = \dim J_m(X_j) +mj \geq (m+1)\dim X_j +mj > (m+1)\dim X.\]
Thus, if $X_0$ is non-empty, then the dimension of $J_m(\bX)_j$ is greater than the dimension of $J_m(\bX)$ over some non-empty open locus in $X$, hence the log jet scheme is reducible.
 \end{example}

\section{A Lemma about log canonical pairs} \label{sec:mus}

We consider here pairs of the form $(X, \alpha_1 Z_1+\alpha_2 Z_2+\cdots + \alpha_r Z_r)$, where $Z_1,\ldots, Z_r$ are closed subschemes in a normal $\QQ$-Gorenstein variety $X$ and $\alpha_1,\ldots,\alpha_r$ are non-negative real numbers. Let $f: Y\to X$ be a log resolution of the pair, meaning that $f$ is a proper birational morphism, $Y$ is non-singular, the scheme theoretic inverse images $f^{-1}(Z_i)$ are Cartier divisors, and the union of these inverse images together with the exceptional locus of $f$ forms a divisor of simple normal crossings. Then the pair is called log canonical if the total discrepancy divisor of $f$:
\[ Tot.discr = K_{Y/X} - \sum_i \alpha_i f^{-1} (Z_i) \]
has coefficients $\geq -1$.  The log canonical threshold $\lct(X, \alpha_1 Z_1+\alpha_2 Z_2+\cdots + \alpha_r Z_r)$ is the maximal $\alpha>0$, such that $(X, \alpha(\alpha_1 Z_1+\alpha_2 Z_2+\cdots + \alpha_r Z_r))$ is log canonical.
In particular, the pair is log canonical if and only if its log canonical threshold is $\geq 1$.

The multiplier ideal sheaf of the pair $(X, \alpha_1 Z_1+\alpha_2 Z_2+\cdots + \alpha_r Z_r)$ is, with notation as above,
\[ \cJ(X,  \alpha_1 Z_1+\alpha_2 Z_2+\cdots + \alpha_r Z_r) = f_* \cO_Y(\ulcorner Tot.discr\urcorner).\]
Note that  the multiplier ideal sheaf is equal to $\cO_X$ if and only if the coefficients of the total  discrepancy divisor are $>-1$. The pair is log canonical if and only if  $\cJ(X, (1-\epsilon)( \alpha_1 Z_1+\alpha_2 Z_2+\cdots + \alpha_r Z_r)) = \cO_X$ for any $\epsilon >0$.

For a point $x\in X$, the log canonical threshold of the pair $(X, \alpha_1 Z_1+\cdots + \alpha_r Z_r)$ at $x$ is
\[ \lct_x(X, \alpha_1 Z_1+\cdots + \alpha_r Z_r) = \lct (U, \alpha_1 Z_1|_U+\cdots + \alpha_r Z_r|_U),\]
where $U$ is a sufficiently small open neighborhood of $x$. We say that the pair is log canonical at $x$ if the log canonical threshold at $x$ is $\geq 1$. This is equivalent to the equality of stalks
\[  \cJ(X, (1-\epsilon)( \alpha_1 Z_1+\cdots + \alpha_r Z_r))_x = \cO_{X,x}\]
for any $\epsilon >0$. 

More generally, for a closed subset $C\subset X$, we say that the pair $(X, \alpha_1 Z_1+\cdots + \alpha_r Z_r)$ is log canonical near $C$ if it is log canonical in some open neighborhood of $C$. Equivalently, the pair is log canonical at every point $x\in C$.

  \begin{lemma} \label{lem-mustata} Let $X$ be a non-singular variety and $Z_1,\ldots, Z_r$ irreducible closed subschemes of codimension $c_1,\ldots,c_r$, respectively. Then the pair 
  \[ (X, (c_1+\cdots+c_r)(Z_1\cap\cdots\cap Z_r))\]
  is log canonical if and only if the pair
  \[ (X, c_1 Z_1 + \cdots + c_r Z_r))\]
  is log canonical near $Z_1\cap\cdots\cap Z_r$.
    \end{lemma}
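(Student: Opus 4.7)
\emph{Plan.} The plan is to translate both sides into multiplier-ideal statements and apply Musta{\c{t}}{\u{a}}'s summation formula for the multiplier ideal of a sum of ideals. Write $I_i = I_{Z_i}$ and $I_W = I_1 + \cdots + I_r$ for the defining ideals (so $W = Z_1 \cap \cdots \cap Z_r$), and $c = c_1 + \cdots + c_r$. Fix a common log resolution $f \colon Y \to X$ resolving all the ideals $I_1, \ldots, I_r$ and $I_W$. For each prime divisor $E$ on $Y$ set $a_{i,E} = \ord_E(I_i)$ and $k_E = \ord_E(K_{Y/X})$. Since the valuation of an ideal sum equals the minimum of the valuations, $\ord_E(I_W) = \min_i a_{i,E}$.

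\emph{Easy direction.} The inequality $\sum_i c_i\, a_{i,E} \geq c \cdot \min_i a_{i,E}$ immediately shows that the log-canonical bound $k_E + 1 \geq \sum_i c_i a_{i,E}$ for $(X, \sum c_i Z_i)$ at $E$ implies the bound $k_E + 1 \geq c \cdot \ord_E(I_W)$ for $(X, cW)$ at $E$. Restricting to divisors $E$ centered in $W$ gives the implication ``$(X, c_1 Z_1 + \cdots + c_r Z_r)$ log canonical near $W \Rightarrow (X, cW)$ log canonical''.

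\emph{Hard direction.} For the converse I would invoke Musta{\c{t}}{\u{a}}'s summation formula (Musta{\c{t}}{\u{a}}, \emph{The multiplier ideals of a sum of ideals}, Trans.~AMS 2002; see also Lazarsfeld, \emph{Positivity in Algebraic Geometry}, Vol.~II):
\[ \cJ(X, \lambda\, I_W) \;=\; \sum_{\substack{\lambda_i \geq 0\\ \lambda_1 + \cdots + \lambda_r = \lambda}} \cJ(X, \lambda_1 Z_1 + \cdots + \lambda_r Z_r). \]
From $(X, cW)$ log canonical one gets $\cJ(X, (c-\varepsilon) I_W) = \cO_X$ near $W$ for every $\varepsilon > 0$. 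At a point $x \in W$, a finite sum of ideals in the local ring $\cO_{X,x}$ equals $\cO_{X,x}$ iff some summand does; so for each $\varepsilon > 0$ there is a tuple $(\lambda_1^\varepsilon, \ldots, \lambda_r^\varepsilon)$ with $\sum \lambda_i^\varepsilon = c-\varepsilon$ such that $(X, \sum \lambda_i^\varepsilon Z_i)$ is Kawamata log terminal (klt) at $x$. Since klt for a sum forces klt for each summand, $\lambda_i^\varepsilon < \lct_x(X, Z_i) \leq \codim_x(Z_i) = c_i$. Combined with $\sum_i (c_i - \lambda_i^\varepsilon) = \varepsilon$ and each term positive, this forces $\lambda_i^\varepsilon \in (c_i - \varepsilon,\, c_i)$. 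Letting $\varepsilon \to 0^+$ (extracting a convergent subsequence by compactness of $\prod_i [c_i - 1, c_i]$) gives $\lambda_i^\varepsilon \to c_i$, and the strict klt inequalities $k_E + 1 > \sum_i \lambda_i^\varepsilon a_{i,E}$, valid on the finitely many $E$ of the log resolution, pass in the limit to $k_E + 1 \geq \sum_i c_i a_{i,E}$. Hence $(X, \sum c_i Z_i)$ is log canonical at $x$, and so log canonical near $W$.

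\emph{Main obstacle.} The central nontrivial ingredient is Musta{\c{t}}{\u{a}}'s summation formula, whose proof requires a careful analysis of simultaneous log resolutions of all $\RR$-linear combinations of the $I_i$. Once this is in hand, the rest is a clean combination of the classical bound $\lct \leq \codim$ (seen, e.g., by blowing up a smooth point of $Z_i$ away from the other $Z_j$) and a compactness/limit argument.
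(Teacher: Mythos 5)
Your proof is correct and follows essentially the same route as the paper's: both directions rest on exactly the same two ingredients, namely the summation formula for multiplier ideals of a sum of ideals (the paper cites Jow--Miller for the $r$-fold version; Musta{\c{t}}{\u{a}}'s TAMS paper is the $r=2$ case) and the bound $\lct_x(X,Z_i)\le c_i$. The only divergence is the endgame of the hard direction: the paper argues by contradiction, using continuity of the log canonical threshold in $(\lambda_1,\dots,\lambda_r)$ and compactness of the simplex $\{\lambda_i\ge 0,\ \sum_i\lambda_i=c\}$ to extract a uniform bound $\phi<1-\epsilon$, whereas you pin down $\lambda_i^\varepsilon\in(c_i-\varepsilon,c_i)$ directly from $\lambda_i^\varepsilon<c_i$ and $\sum_i\lambda_i^\varepsilon=c-\varepsilon$ and then pass to the limit on a fixed log resolution --- slightly more self-contained, since it avoids invoking continuity of $\lct$. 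One small slip in a parenthetical: blowing up a smooth \emph{point} of $Z_i$ has discrepancy $\dim X-1$ and only yields $\lct_x(X,Z_i)\le\dim X$; to get the needed bound $\le c_i$ you should blow up $X$ along the smooth locus of $(Z_i)_{\mathrm{red}}$ near a general point of $Z_i$ lying in the given neighborhood of $x$ (such points exist in every neighborhood because $Z_i$ is irreducible), where the exceptional divisor has discrepancy $c_i-1$ and order $\ge 1$ along $Z_i$.
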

  
\begin{proof} This proof is due to M. Musta{\c{t}}{\u{a}}.

Let $c=c_1+\cdots+c_r$. 

$\Leftarrow.$ Assume that $(X,c_1 Z_1 + \cdots + c_r Z_r )$ is log canonical. Let $a_i$ be the ideal sheaf of $Z_i$, $1\leq 1\leq r$. Since $a_1^{c_1}\cdots a_r^{c_r} \subset (a_1+\cdots+ a_r)^c$, monotonicity of log canonical threshold implies 
\[ \lct(X, c(Z_1\cap\cdots \cap Z_r)) \geq \lct(X,c_1 Z_1 + \cdots + c_r Z_r )\geq 1,\]
and hence $(X, c(Z_1\cap\cdots \cap Z_r))$ is also log canonical.

$\Rightarrow.$ Let $(X, c(Z_1\cap\cdots \cap Z_r))$ be log canonical, but assume by contradiction that $(X,c_1 Z_1 + \cdots + c_r Z_r )$ is not log canonical at some point $x\in Z_1\cap\cdots\cap Z_r$. Consider 
\[ \phi(\lambda_1,\ldots,\lambda_r) = \lct_x(X,\lambda_1 Z_1 + \cdots + \lambda_r Z_r )  \]
as a function of $\lambda_1,\ldots, \lambda_r$ in the domain $\lambda_i\geq 0$, $\sum_i \lambda_i=c$. If some $\lambda_i>c_i$, then $\phi(\lambda_1,\ldots,\lambda_r)<1$ because the pair $(X,\lambda_i Z_i)$ is not log canonical in any neighborhood of $x$. By assumption, $\phi(c_1,\ldots,c_r)<1$, hence $\phi(\lambda_1,\ldots,\lambda_r)<1$ for all $(\lambda_1,\ldots,\lambda_r)$ in the compact domain.  
 Since the log canonical threshold $\phi$ depends continuously on $\lambda_i$, we can find $\epsilon>0$, such that
\[ \phi(\lambda_1,\ldots,\lambda_r) <1-\epsilon\]
for all $\lambda_i\geq 0$, $\sum_i \lambda_i=c$.

A theorem of Jow and Miller \cite{JowMiller} states that the multiplier ideal sheaves satisfy
\[ \cJ(X, \alpha(Z_1\cap\cdots \cap Z_r)) = \sum_{\lambda_1+\cdots +\lambda_r=\alpha} \cJ(X,\lambda_1 Z_1 + \cdots + \lambda_r Z_r ) .\]
Since $(X, c(Z_1\cap\cdots \cap Z_r))$ is log canonical, the left hand side is equal to $\cO_X$ for any $\alpha< c$. Let us fix $\alpha = c -\eta$ for some $\eta>0$. Then on the right hand side for some $\lambda_i\geq 0$, $\lambda_1+\cdots +\lambda_r=c-\eta$,
\[\cJ(X, \lambda_1 Z_1 + \cdots + \lambda_r Z_r)_x = \cO_{X,x},\]
hence
\[ \lct_x(X, \lambda_1 Z_1 + \cdots + \lambda_r Z_r) \geq 1.\]
Scaling this by $\frac{c}{c-\eta}$, we get 
\[ \lct_x(X, \frac{c}{c-\eta} (\lambda_1 Z_1 + \cdots + \lambda_r Z_r )) \geq \frac{c-\eta}{c}.\]
However, $\sum_i \frac{\lambda_i c}{c-\eta} = c$, and hence
\[ \lct_x(X, \frac{c}{c-\eta} (\lambda_1 Z_1 + \cdots + \lambda_r Z_r )) <1-\epsilon.\]
Combining the two inequalities, we get 
\[ \frac{c-\eta}{c} < 1-\epsilon \]
for any $\eta>0$. Taking $\eta<c \epsilon$ gives a contradiction.
\end{proof}

We stated the lemma in the form that will be used below. It can be proved more generally. The variety $X$ needs to be normal, $\QQ$-Gorenstein, $Z_i$ closed subschemes and the numbers $c_i$ must satisfy
\[ c_i \geq \lct (X,Z_i) \] 
near $Z_1\cap\cdots\cap Z_r$. Then the same conclusions hold.

\section{Proof of the main theorem}

We now prove Theorem~\ref{thm-main}. The problem is local, so we assume that $Y=\Spec k[P]$ is a toric variety and $X\subset Y$ is a complete intersection of codimension $c$, and that $X$ is normal and intersects the torus orbits in $Y$ in proper dimension. The log str on $Y$ is the standard one and it induces the log str on $X$. In particular, $D_X = D_Y|_X$. 

We need to prove that $J_m(\bX)$ is irreducible for all $m$ if and only if $(X,D_X)$ is log canonical near $D_X$ and $X\setmin D_X$ is canonical. Note that $X\setmin D_X$ is a complete intersection  in the torus $T=Y\setmin D_Y$ and the log jet scheme $J_m(\bX)$ over   $X\setmin D_X$ is the ordinary jet scheme $J_m(X\setmin D_X)$. By Musta{\c{t}}{\u{a}}'s theorem for local complete intersection varieties \cite{Mustata1}, $X\setmin D_X$ has canonical singularities if and only if $J_m(X\setmin D_X)$ is irreducible for all $m$. We will assume these equivalent condiditons in the following. Thus, we need to show that $J_m(\bX)$ has no components lying over $D_X$ if and only if $(X,D_X)$ is log canonical.

\subsection{The case of non-singular $Y$}

Assume that $Y$ is a non-singular toric variety. For later use we will relax the condition of $X$ being normal. We consider complete intersections $X$ that are non-singular in codimension $1$, but could be non-normal at some points in $D_X$. The notion of $(X,D_X)$ being log canonical is the same as before.

\begin{lemma} \label{lem-reduc} The pair $(X,D_X)$ is log canonical if and only if the pair $(Y, cX+D_Y)$ is log canonical.  
 \end{lemma}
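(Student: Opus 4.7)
The plan is to reduce the statement to (a mild extension of) Musta\c{t}\u{a}'s Lemma~\ref{lem-mustata} and then to descend from $Y$ to $X$ via adjunction on a common log resolution. The statement is local on $Y$ and only concerns behaviour along $X$ (away from $X$ both pairs restrict to the log smooth $(Y\setmin X, D_Y|_{Y\setmin X})$), so we work at a point $x\in X\cap D_Y$. Since $X$ is a local complete intersection in the smooth toric variety $Y$, pick a regular sequence $f_1,\ldots,f_c\in\cO_Y$ generating the ideal of $X$ and set $H_i=V(f_i)$. Each $H_i$ is a Cartier divisor of codimension $1$ and $X=H_1\cap\cdots\cap H_c$ as schemes.

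Step 1 (Lemma~\ref{lem-mustata} with boundary). The first task is to prove
\[ (Y,\,D_Y+cX)\ \text{is lc near } X \iff (Y,\,D_Y+H_1+\cdots+H_c)\ \text{is lc near } X. \]
This is a direct extension of Lemma~\ref{lem-mustata} with the fixed reduced SNC boundary $D_Y$ carried along. The $(\Leftarrow)$ direction follows from the containment $f_1\cdots f_c\in I_X^c$ and monotonicity of log canonical thresholds, where $D_Y$ plays no role since it is common to both sides. For $(\Rightarrow)$, one reruns the Jow--Miller decomposition of $\cJ(Y,\,\alpha(Z_1\cap\cdots\cap Z_r))$ used in the proof of Lemma~\ref{lem-mustata}, tensoring each multiplier ideal with $\cO_Y(-D_Y)$ at the appropriate step; since $D_Y$ is SNC on the smooth $Y$, adding it to every term in the Jow--Miller sum does not disturb the combinatorial argument that forces a contradiction with continuity of $\phi(\lambda_1,\ldots,\lambda_c)$.

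Step 2 (Adjunction). The second task is
\[ (Y,\,D_Y+H_1+\cdots+H_c)\ \text{is lc near } X \iff (X,\,D_X)\ \text{is lc}. \]
Take a simultaneous log resolution $\pi:\tilde Y\to Y$ of $X+D_Y$ and let $\tilde X\subset\tilde Y$ be the strict transform of $X$, which is smooth by Hironaka. Using that $K_Y+D_Y\sim 0$ on the toric variety $Y$ and the complete-intersection adjunction $K_X+D_X=(K_Y+D_Y+H_1+\cdots+H_c)|_X$, one obtains the restriction identity
\[ \bigl(K_{\tilde Y}-\pi^*(K_Y+D_Y+\textstyle\sum H_i)\bigr)\bigl|_{\tilde X}\ =\ K_{\tilde X}-(\pi|_{\tilde X})^*(K_X+D_X). \]
Since log canonicity near $X$ (resp.\ of $(X,D_X)$) is detected by discrepancies along divisors on $\tilde Y$ meeting $\tilde X$ (resp.\ on $\tilde X$), and these agree by the identity above, the two conditions coincide.

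The main obstacle is in Step 2: the intermediate complete intersections $H_j\cap\cdots\cap H_c$ are typically non-normal, so one cannot simply iterate the classical codimension-one inversion of adjunction theorem $c$ times. I would bypass this entirely by working on the smooth ambient $\tilde Y$ and the smooth strict transform $\tilde X$ and matching discrepancies through the single global adjunction identity above, rather than peeling off one divisor at a time. A secondary (routine) issue is to verify that the Jow--Miller argument in Step 1 remains valid in the presence of the boundary $D_Y$; the only care needed is that $D_Y$ be reduced SNC on the smooth $Y$, which is automatic here.
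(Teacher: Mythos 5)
Your route is genuinely different from the paper's, but Step 2 contains a real gap. First, the displayed ``restriction identity'' is false as written: by the complete-intersection adjunction you yourself quote, $(\pi|_{\tilde X})^*(K_X+D_X)=\pi^*(K_Y+D_Y+\sum_i H_i)|_{\tilde X}$, so your identity collapses to $K_{\tilde X}=K_{\tilde Y}|_{\tilde X}$, which omits the contribution of $\det N_{\tilde X/\tilde Y}$ (equivalently, of the strict transforms of the $H_i$). Second, and more fundamentally, even after correcting the identity the conclusion does not follow from it: the discrepancies of $(Y,D_Y+\sum_i H_i)$ are attached to divisorial valuations of $Y$, those of $(X,D_X)$ to divisorial valuations of $X$, and matching the two families is exactly the content of (inversion of) adjunction in codimension $c$. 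A linear-equivalence identity on $\tilde X$ cannot supply this: exceptional divisors of $\pi$ need not meet $\tilde X$ or restrict to divisors on it, not every divisor over $X$ arises by restricting a divisor over $Y$, and the implication ``$(X,D_X)$ lc $\Rightarrow$ ambient pair lc near $X$'' is the hard Shokurov-type direction, a theorem of Kawakita rather than a formal consequence of restricting canonical classes. This is precisely the difficulty you announce you will ``bypass''; the bypass is the missing proof.

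The paper's argument avoids the issue by blowing up $Y$ along $X$: the exceptional divisor $F$ of $f\colon\tilde Y\to Y$ is a $\PP^{c-1}$-bundle over $X$, so $(X,D_X)$ is log canonical iff $(F,f^*D_X)$ is; since $f^{-1}(X)=F$ as a Cartier divisor and $K_{\tilde Y/Y}=(c-1)F$, the pair $(Y,cX+D_Y)$ is log canonical iff $(\tilde Y,F+f^*D_Y)$ is; and Kawakita's codimension-one inversion of adjunction applied to the divisor $F\subset\tilde Y$ closes the loop. Note that this makes your Step 1 unnecessary (the paper keeps $cX$ and never passes to $H_1+\cdots+H_c$); as an aside, that step has its own problems, since the $H_i$ exist only locally and need not be irreducible as Lemma~\ref{lem-mustata} requires, and the Jow--Miller argument with the fixed boundary $D_Y$ carried along is asserted rather than proved. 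To repair your outline, replace Step 2 by the blowup-plus-Kawakita argument.
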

 
 \begin{proof} The proof is the same as the corresponding reduction in \cite{Mustata1}.
 Let $f: \tilde{Y}\to Y $ be the blowup of $Y$ along $X$, with $F$ the exceptional divisor. Then $F$ is a locally trivial $\PP^{c-1}$-bundle over $X$. Hence $(X,D_X)$ is log canonical if and only if $(F,f^*(D_X))$ is log canonical.  Notice that $F$ is also non-singular in codimension $1$.
 
 The log canonicity of $(Y,cX+D_Y)$ can be checked on a resolution that factors through $\tilde{Y}$. The blowup $\tilde{Y}$ is a local complete intersection, hence Gorenstein. Since $F\subset \tilde{Y}$ is non-singular in codimension $1$, so is $\tilde{Y}$. Then from $K_{\tilde{Y}/Y} = (c-1)F$ it follows that  $(Y,cX+D_Y)$ is log canonical if and only if the pair $(\tilde{Y}, F+ f^* D_Y)$ is log canonical. 
 
 Applying inverse of adjunction \cite{Kawakita}, the pair $(F,f^*D_X) = (F, f^* D_Y |_F)$ is log canonical if and only if the pair $(\tilde{Y}, F+ f^* D_Y)$ is log canonical.
 \end{proof}
 
 Recall that we have a stratification $X = \cup_{l\geq 0} X_l$ of $X$ into locally closed subschemes, and the corresponding stratification of the log jet schemes $J_m(\bX) = \cup_l   J_m(\bX)_l$, where each $J_m(\bX)_l$ is a locally trivial $\bfA^{ml}$-bundle over $J_m(X_l)$. By Assumption~\ref{ass-strat}, $X_l$ has codimension $l$ in $X$, hence $(c+l)$ is the codimension of $X_l$ in $Y$.
 
 To simplify notation, we will write $(Y, X_l)$ for the pair $(Y\setmin \cup_{i>l} X_i, X_l)$. 
 
 \begin{lemma} The log jet scheme $J_m(\bX)$ is irreducible for any $m>0$ if and only if the pair $(Y,(c+l)X_l)$ is log canonical for any $l>0$.
 \end{lemma}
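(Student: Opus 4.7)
The plan is to convert both conditions into dimension bounds on the ordinary jet schemes $J_m(X_l)$ and check they agree. Since $Y$ is smooth, $J_m(\bY)$ is smooth and $J_m(\bX)$ is a complete intersection inside it of codimension $c(m+1)$; every component therefore has dimension at least $(m+1)\dim X$. By Musta\c{t}\u{a}'s theorem \cite{Mustata1} applied to the canonical complete intersection $X \setminus D_X \subset T$, the open piece $J_m(\bX)_0 = J_m(X\setminus D_X)$ is irreducible of dimension $(m+1)\dim X$, and its closure is the main component. Any additional component must be disjoint from $J_m(\bX)_0$ and hence contained in $\pi^{-1}(D_X) = \bigcup_{l > 0} J_m(\bX)_l$, so irreducibility of $J_m(\bX)$ is equivalent to $\dim J_m(\bX)_l < (m+1)\dim X$ for every $l > 0$. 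Combining with the bundle formula $\dim J_m(\bX)_l = \dim J_m(X_l) + ml$, irreducibility for all $m$ is equivalent to
\[ \dim J_m(X_l) \le (m+1)(\dim X - l) + l - 1 \quad \text{for all } l > 0,\ m \ge 1. \]

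On the log canonical side, Musta\c{t}\u{a}'s jet-dimension formula for the log canonical threshold translates $(Y, (c+l)X_l)$ log canonical into
\[ \dim J_m(X_l) \le (m+1)\bigl(\dim Y - (c+l)\bigr) = (m+1)(\dim X - l) \quad \text{for all } m \ge 0. \]
The implication log canonical $\Rightarrow$ irreducible is immediate, since the log canonical bound is stronger than the irreducibility bound by $l - 1$. For the converse, suppose $(Y, (c+l)X_l)$ is not log canonical; then $\lct(Y, X_l) < c+l$, and Musta\c{t}\u{a}'s formula guarantees some $m$ achieves $\dim J_m(X_l)/(m+1) > \dim X - l$. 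The irreducibility bound rewrites as
\[ \frac{\dim J_m(X_l)}{m+1} \le (\dim X - l) + \frac{l-1}{m+1}, \]
whose right side tends to $\dim X - l$ as $m \to \infty$, producing a contradiction once we know the supremum on the left is realized for arbitrarily large $m$.

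The main obstacle is precisely this asymptotic step: at each single $m$, the two bounds differ by the constant $l - 1$, so the equivalence truly uses irreducibility for all $m$ simultaneously. The requisite cofinality of the $m$'s at which Musta\c{t}\u{a}'s supremum is attained comes from the valuative interpretation of $\lct$: the supremum is realized by a divisorial valuation $v$ with $\lct = A(v)/v(X_l)$, and such a $v$ produces contact loci in $J_m$ attaining the supremum for every $m$ divisible by $v(X_l)$. Passing to the limit along this cofinal sequence of $m$'s forces $\sup_m \dim J_m(X_l)/(m+1) \le \dim X - l$, which closes the argument and finishes the proof of the lemma.
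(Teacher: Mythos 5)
Your proof is correct and follows essentially the same route as the paper: both sides are translated into the jet-dimension bounds $\dim J_m(X_l)+ml<(m+1)\dim X$ (irreducibility) and $\dim J_m(X_l)\le (m+1)(\dim X-l)$ (log canonicity of $(Y,(c+l)X_l)$, via Musta\c{t}\u{a}'s formula for the log canonical threshold), the forward implication is immediate, and the converse is closed by letting $m\to\infty$ using that the supremum of $\dim J_m(X_l)/(m+1)$ is attained for $m+1$ sufficiently divisible. The cofinality issue you isolate is exactly the paper's ``for any $m+1$ large and divisible enough,'' so your argument matches the paper's.
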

 
 \begin{proof}
 Recall that since $X$ is a complete intersection, all components of $J_m(\bX)$ have dimension at least $(m+1)d$, where $d=\dim X$. The log jet scheme $J_m(\bX)$ has no components over $D_X$ if and only if for any $l>0$,
\begin{equation}   \dim J_m(\bX)_l = \dim J_m(X_l) + ml <d(m+1). \label{eq-irred} \end{equation}

By the main theorem in \cite{Mustata2}, the pair $(Y, (c+l)X_l)$ is log canonical if and only if for any $m$
\begin{equation} \dim J_m(X_l) \leq (m+1)(d-l). \label{eq-logc}\end{equation}
Moreover, 
\[ \lct(Y,X_l) = d+c-  \frac{\dim J_m(X_l)}{m+1}\]
for any $m+1$ large and divisible enough.

If $(Y,(c+l) X_l)$ is log canonical for all $l$, then inequality~(\ref{eq-logc}) holds for all $l$. This implies that inequality~(\ref{eq-irred}) also holds for all $l$, hence $J_m(\bX)$ is irreducible. 

Conversely, if  $J_m(\bX)$ is irreducible for all $m$, then the inequality~(\ref{eq-irred}) holds for all $m, l$. Computing the log canonical threshold, we get
\[ \lct(Y,X_l) > d+c - \frac{(m+1)d-ml}{m+1} = c+l -\frac{l}{m+1},\]
for any $m+1$ large and divisible enough and for any $l>0$. This implies that $\lct(Y,X_l) \geq c+l$ and hence $(Y,(c+l) X_l)$ is log canonical for any $l>0$.
\end{proof}

\begin{lemma} The pair $(Y,(c+l)X_l)$ is log canonical if and only if $(Y, cX+D_Y)$ is log canonical near $X_l$.
\end{lemma}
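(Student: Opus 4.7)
The plan is to reduce this to Lemma~\ref{lem-mustata} by working locally near each point of $X_l$. Both sides of the claimed equivalence are local conditions in a neighborhood of $X_l$: with the convention that $(Y, X_l)$ abbreviates $(Y \setmin \bigcup_{i>l} X_i, X_l)$, log canonicity of $(Y,(c+l) X_l)$ just means log canonicity at each $x \in X_l$ in a neighborhood disjoint from the higher strata $X_i$, $i>l$.

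First I would analyze the local geometry at an arbitrary $x \in X_l$. Since the log structure on $X$ is the restriction of the standard log structure on $Y$, the point $x$ lies in the codimension-$l$ toric stratum of $Y$ but not in any deeper one. Because $Y$ is non-singular, there is an open neighborhood $U$ of $x$, disjoint from $X_i$ for $i>l$, in which $D_Y$ has exactly $l$ irreducible components $D_1,\ldots,D_l$, and the scheme-theoretic identifications
\[
X_l \cap U = X \cap D_1 \cap \cdots \cap D_l, \qquad D_Y|_U = D_1 + \cdots + D_l
\]
hold; the first follows from the description of $X_l$ as the vanishing locus of the non-unit sections of $\cM_X$, which matches the defining ideal of the codimension-$l$ toric stratum of $Y$ through $x$. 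Assumption~\ref{ass-strat} ensures that this intersection has the expected codimension $c+l$ in $U$.

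Next I would apply Lemma~\ref{lem-mustata} in $U$ with the $l+1$ irreducible closed subschemes $Z_0 = X|_U$ of codimension $c$ and $Z_j = D_j|_U$ of codimension $1$ for $j=1,\ldots,l$, so that the codimensions sum to $c+l$. The lemma then yields
\[
(U, (c+l) X_l) \text{ is log canonical} \iff (U, cX + D_1+\cdots+D_l) \text{ is log canonical near } X_l,
\]
and the right-hand side is exactly $(U, cX + D_Y)$ being log canonical near $X_l$. Running this argument at each $x \in X_l$ produces the global equivalence in a neighborhood of $X_l$.

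The main obstacle I expect is the scheme-theoretic verification that exactly $l$ components of $D_Y$ pass through each $x \in X_l \setmin X_{l+1}$ and that together with $X$ they cut out $X_l$ scheme-theoretically. Both points reduce to the non-singularity of $Y$ combined with the construction of the stratification from the log structure, and Assumption~\ref{ass-strat} supplies the required proper-dimension intersection; once these setup facts are in hand, the conclusion is a direct application of Lemma~\ref{lem-mustata}.
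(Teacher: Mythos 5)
Your proposal is correct and follows essentially the same route as the paper: localize at a point $x\in X_l$, identify $X_l = X\cap D_1\cap\cdots\cap D_l$ and $D_Y = D_1+\cdots+D_l$ near $x$, and apply Lemma~\ref{lem-mustata} to the subschemes $X$ (codimension $c$) and $D_1,\ldots,D_l$ (codimension $1$ each). The paper's proof is just a terser version of the same argument.
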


\begin{proof}
Let $x\in X_l$ and let $D_1,\ldots,D_l$ be the irreducible toric divisors in $Y$ containing $x$. Then  
 $X_l = X\cap D_1\cap\cdots \cap D_l$ and $D_Y=D_1+\cdots +D_l$ near $x$. Now apply Lemma~\ref{lem-mustata} to the pair $(Y, cX+D_1+\cdots+D_l)$.
\end{proof}

The last two lemmas show that $J_m(\bX)$ is irreducible if and only if $(Y, cX+D_Y)$ is log canonical near $X_l$ for any $l$. This means that  $J_m(\bX)$ is irreducible if and only if $(Y, cX+D_Y)$ is log canonical. Lemma~\ref{lem-reduc} then finishes the proof of the main theorem in the case when $Y$ is non-singular.

\subsection{The case of arbitrary toric $Y$}

Let $f: \tilde{Y}\to Y$ be a toric resolution of singularities of $Y$. Then $K_{\tilde{Y}}+ D_{\tilde{Y}} = f^*(K_Y+D_Y)$ because both of these divisors are trivial.

Let $\tilde{X} = f^{-1}(X)$. Since $X$ is a complete intersection and it intersects the torus orbits of $Y$ properly, it follows that $\tilde{X}$ is an irreducible, complete intersection in $\tilde{Y}$, and the morphism $f:\tilde{X}\to X$ is birational. Let $\tilde{X}$ have the log str induced by the log str on $\tilde{Y}$. 

\begin{lemma} $J_m(\bX)$ is irreducible if and only if $J_m(\tilde{\bX})$ is irreducible.
\end{lemma}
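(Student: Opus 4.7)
The plan is to identify $J_m(\tilde{\bX})$ as a fibre product over $J_m(\bX)$ using the fibre-square property for toric log jet schemes, and then to deduce the equivalence of irreducibility from properness of the induced projection together with the dimension bounds coming from the local complete intersection hypothesis.

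The first thing I would do is show that $J_m(\tilde{\bX}) = J_m(\bX)\times_X \tilde{X}$.  The fibre-square property for toric birational morphisms recalled earlier gives $J_m(\tilde{\bY}) = J_m(\bY)\times_Y\tilde{Y}$; since both $J_m(\bX)\subset J_m(\bY)$ and $J_m(\tilde{\bX})\subset J_m(\tilde{\bY})$ are cut out by the same derived equations $d^jf_i$ (where $f_1,\ldots,f_c$ define $X$ in $Y$ and therefore $\tilde{X}=f^{-1}(X)$ in $\tilde{Y}$), the identification descends to these closed log jet subschemes.  Consequently the projection $\pi:J_m(\tilde{\bX})\to J_m(\bX)$ is the base change of the proper surjective morphism $f|_{\tilde{X}}:\tilde{X}\to X$ along $J_m(\bX)\to X$, so $\pi$ is itself proper and surjective; moreover, since $f$ is an isomorphism over the torus $T\subset Y$ and $\tilde{X}\cap\tilde{T}=X\cap T=X^\circ$, the projection $\pi$ restricts to an isomorphism over the open subset $J_m(X^\circ)\subset J_m(\bX)$.

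The forward direction ``$J_m(\tilde{\bX})$ irreducible $\Rightarrow$ $J_m(\bX)$ irreducible'' is then immediate, since $J_m(\bX)=\pi(J_m(\tilde{\bX}))$ is the image of an irreducible space under a proper surjection.  For the reverse direction, I would use the running assumption that $X^\circ$ is canonical, so by Musta{\c{t}}{\u{a}}'s original theorem $J_m(X^\circ)$ is irreducible of dimension $(m+1)d$; accordingly irreducibility of $J_m(\bX)$ is equivalent to the strata bound $\dim J_m(X_l)+ml<(m+1)d$ for all $l>0$, and similarly for $J_m(\tilde{\bX})$ in terms of the strata $\tilde{X}_{l'}$.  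A hypothetical extra component $\tilde{W}$ of $J_m(\tilde{\bX})$ would be disjoint from $J_m(X^\circ)$, hence lie in some stratum $J_m(\tilde{\bX})_{l'}$ with $l'>0$ and project under $\pi$ into some $J_m(\bX)|_{X_l}$ with $l\leq l'$, while satisfying $\dim\tilde{W}\geq(m+1)d>\dim\pi(\tilde{W})$.  The fibre product structure constrains the fibres of $\pi|_{\tilde{W}}$ to lie inside the corresponding fibres of $f|_{\tilde{X}}$, whose dimension over a point of $X_l$ is at most $l'-l$ by the toric orbit structure of $f:\tilde{Y}\to Y$.  Combining these dimension bounds forces $\dim J_m(X_l)+ml\geq(m+1)d$, contradicting the assumed irreducibility of $J_m(\bX)$.

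The main obstacle I anticipate is the dimension accounting in the reverse direction: one must carefully match up contributions from the bundle dimensions $ml$ and $ml'$ of the two log-jet strata, the fibre dimension $l'-l$ of $f|_{\tilde{X}}:\tilde{X}_{l'}\to X_l$, and the complete intersection bound $\dim\tilde{W}\geq(m+1)d$.  Making this rigorous requires the toric orbit analysis of $f:\tilde{Y}\to Y$ together with the proper intersection of $\tilde{X}$ with the orbits of $\tilde{Y}$, ensuring in particular that $\tilde{X}_{l'}\to X_l$ has fibres of dimension exactly $l'-l$ over the generic point of $X_l$ whenever a codimension-$l'$ stratum of $\tilde{Y}$ sits over the codimension-$l$ stratum of $Y$; once that toric fact is in hand, the inequalities combine cleanly to yield the desired contradiction.
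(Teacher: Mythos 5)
Your setup and your forward direction are essentially the paper's argument: the identification $J_m(\tilde{\bX})\isom J_m(\bX)\times_X\tilde{X}$ is exactly the content of the paper's fibrewise isomorphisms $J_m(\tilde{\bX})_y\isomto J_m(\bX)_{f(y)}$, and the implication ``$J_m(\tilde{\bX})$ irreducible $\Rightarrow J_m(\bX)$ irreducible'' via the proper surjection $\pi$ is identical. The problems are all in the reverse direction, at the step you yourself flag as the main obstacle. First, a bookkeeping error: a codimension-$l'$ orbit $S\subset\tilde{Y}$ maps onto an orbit $T\subset Y$ of codimension $l\geq l'$ (its cone sits inside a cone of at least the same dimension), not $l\leq l'$; the fibres of $S\to T$ then have dimension $l-l'$. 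More seriously, even after this correction your inequalities do not ``combine cleanly.'' From $\dim\tilde{W}\geq(m+1)d$ and fibres of $\pi|_{\tilde{W}}$ of dimension at most $l-l'$ you obtain $\dim\pi(\tilde{W})\geq(m+1)d-(l-l')$, hence only $\dim J_m(X_l)+ml\geq(m+1)d-(l-l')$, which falls short of the required $(m+1)d$ by exactly the codimension jump $l-l'$. Rerouting the count through the ordinary jet schemes of the strata, via $\tilde{X}\cap S\isom(X\cap T)\times\GG_m^{l-l'}$ and $\dim J_m(\GG_m^{l-l'})=(m+1)(l-l')$, produces the same deficit, so no rearrangement of the three quantities you list ($ml$, $ml'$, and the fibre dimension) closes the gap for a single fixed $m$ when $l>l'$.

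The paper's own proof does the accounting differently: it bounds $\dim J_m(X\cap T)$ below by $(m+1)d-ml'-(l-l')=(m+1)d-ml+(m-1)(l-l')$, so that the difference $m(l-l')$ between the two affine-bundle ranks absorbs the fibre dimension with $(m-1)(l-l')\geq 0$ to spare. If you try to reproduce that bound you will find that the delicate point is precisely the relation between $\dim J_m(\tilde{X}\cap S)$ and $\dim J_m(X\cap T)$, so you should verify it carefully rather than assert it. A robust way to close your argument is to use the lemma only in the form actually needed for the main theorem, with ``for all $m$'' on both sides: if $J_m(\bX)$ is irreducible for every $m$, then by Musta{\c{t}}{\u{a}}'s jet-theoretic formula for the log canonical threshold one gets the stronger bound $\dim J_m(X_l)\leq(m+1)(d-l)$ for all $m$ and $l>0$, and then $\dim J_m(\tilde{X}\cap S)+ml'=\dim J_m(X\cap T)+(m+1)(l-l')+ml'\leq(m+1)d-l'<(m+1)d$, which is what you need. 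Without some such additional input, the contradiction you announce does not follow from the bounds you state.
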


\begin{proof}
Let $y\in \tilde{Y}$. Since the log morphism $f: \tilde{\bY}\to \bY$ is birational and log smooth, the morphism of jets
\[ J_m(\tilde{\bY})_y\to J_m(\bY)_{f(y)}\]
is an isomorphism. Here $J_m(\tilde{\bY})_y$ is the fibre of $\pi_m: J_m(\tilde{\bY}) \to Y$ over $y$.

A log jet into $\bX$ is the same as a log jet into $\bY$, with image lying in $X$. This implies that if $y\in \tilde{X}$ then  
\[ J_m(\tilde{\bX})_y\to J_m(\bX)_{f(y)}\]
is also an isomorphism.

If $J_m(\tilde{\bX})$ is irreducible, so is $J_m(\bX)$ because it is the image of $J_m(\tilde{\bX})$.

Conversely, suppose $J_m(\tilde{\bX})$ is reducible. Then for some $l$, 
\[ \dim J_m(\tilde{X}_l) \geq (m+1)d-ml.\]
Let $S\subset \tilde{Y}$ be a torus orbit of codimension $l$, such that 
\[ \dim J_m(\tilde{X}\cap S) \geq (m+1)d-ml.\]
Let $T\subset Y$ be the image of $S$. Then $T$ is a torus orbit of codimension $j\geq l$. The morphism $f: S\to T$ has $(j-l)$ dimensional fibres. It follows that
\[ \dim J_m(X\cap T) \geq (m+1)d -ml-(j-l) = (m+1)d-mj +[(m-1)(j-l)].\]
Since $(m-1)(j-l)\geq 0$,
\[ \dim J_m(X_j) \geq (m+1)d-mj,\]
hence $J_m(\bX)$ is reducible.
\end{proof}

\begin{lemma} The pair $(X,D_X)$ is log canonical if and only if the pair $(\tilde{X},D_{\tilde{X}})$ is log canonical.
\end{lemma}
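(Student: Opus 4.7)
The plan is to show that the birational morphism $f:\tilde X\to X$ is \emph{crepant} for the log pair, in the sense that
\[ K_{\tilde X}+D_{\tilde X} \;=\; f^{\ast}(K_X+D_X), \]
from which the equivalence of log canonicity follows by comparing discrepancies on any common log resolution.

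First I would establish the key adjunction identity. Since $Y$ is a toric variety and $\tilde Y\to Y$ is a toric resolution, we already have $K_{\tilde Y}+D_{\tilde Y}=f^{\ast}(K_Y+D_Y)$, hence
\[ K_{\tilde Y/Y} \;=\; f^{\ast}D_Y-D_{\tilde Y}. \]
Next, because $X\subset Y$ is a complete intersection of codimension $c$ cut out by a regular sequence $g_1,\ldots,g_c$, and because $X$ meets the torus orbits in proper dimension, the pullbacks $g_1\circ f,\ldots,g_c\circ f$ still form a regular sequence on $\tilde Y$ (they cut out $\tilde X=f^{-1}(X)$ in codimension $c$, and $\tilde Y$ is Cohen--Macaulay). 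Therefore $\tilde X$ is a complete intersection in $\tilde Y$ and the conormal sheaves satisfy $N_{\tilde X/\tilde Y}^{\vee}=f^{\ast}N_{X/Y}^{\vee}|_{\tilde X}$. Applying adjunction on both sides gives
\[ K_{\tilde X}-f^{\ast}K_X \;=\; (K_{\tilde Y}-f^{\ast}K_Y)|_{\tilde X} \;=\; (f^{\ast}D_Y-D_{\tilde Y})|_{\tilde X} \;=\; f^{\ast}D_X-D_{\tilde X}, \]
using $D_X=D_Y|_X$ and $D_{\tilde X}=D_{\tilde Y}|_{\tilde X}$. Rearranging yields the crepant identity above.

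Once this is in hand, the equivalence is routine. Choose a log resolution $g\colon\hat X\to\tilde X$ of $(\tilde X,D_{\tilde X})$ that also resolves the exceptional locus of $f$ and such that $h:=f\circ g\colon\hat X\to X$ is a log resolution of $(X,D_X)$; this is possible because both $\tilde X$ and $X$ are complete intersections that are non-singular in codimension one, so the earlier version of log canonicity (which only uses log resolutions, not normality) applies. Writing
\[ K_{\hat X} \;=\; g^{\ast}(K_{\tilde X}+D_{\tilde X})+\sum b_i E_i \;=\; h^{\ast}(K_X+D_X)+\sum b_i E_i, \]
with the standard convention that non-exceptional $E_i$ are assigned the coefficient of $E_i$ in $-h^{\ast}D_X$ (equivalently in $-g^{\ast}D_{\tilde X}$, since these agree off the exceptional locus by the crepant identity), we see that the total discrepancy computed on $\hat X$ is the same whether we view it relative to $(X,D_X)$ or to $(\tilde X,D_{\tilde X})$. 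Hence $(X,D_X)$ is log canonical if and only if $(\tilde X,D_{\tilde X})$ is.

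The main obstacle is the verification of the adjunction step, specifically the claim that $\tilde X$ really is a complete intersection with conormal bundle pulled back from $X$. This uses essentially the proper-dimension hypothesis on torus orbits: without it, $\tilde X=f^{-1}(X)$ might have excess components or fail to be cut out by the regular sequence of pullback equations, and the identification $N_{\tilde X/\tilde Y}^{\vee}=f^{\ast}N_{X/Y}^{\vee}|_{\tilde X}$ would break down. Everything else is formal manipulation of adjunction and discrepancies.
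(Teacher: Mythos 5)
Your proposal is correct and follows essentially the same route as the paper: both proofs reduce the lemma to the crepant identity $K_{\tilde X}+D_{\tilde X}=f^{*}(K_X+D_X)$, obtained by combining $K_{\tilde Y}+D_{\tilde Y}=f^{*}(K_Y+D_Y)$ with adjunction for the complete intersections $X\subset Y$ and $\tilde X\subset\tilde Y$. The paper leaves the conormal-bundle verification and the discrepancy comparison implicit (the complete-intersection property of $\tilde X$ is established in the paragraph preceding the lemma), whereas you spell them out; the substance is identical.
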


\begin{proof}
The morphism $f:\tilde{X}\to X$ is proper birational, hence it is enough to show that $f^*(K_X+D_X) = K_{\tilde{X}}+D_{\tilde{X}}$.

Since $X$ is a complete intersection in $Y$, it follows that 
\[ K_X+D_X = (K_Y+D_Y)|_X.\]
Similarly,
\begin{eqnarray*}  K_{\tilde{X}}+D_{\tilde{X}} &=& K_{\tilde{Y}}+D_{\tilde{Y}}|_{\tilde{X}} \\
&=& f^*(K_Y+D_Y) |_{\tilde{X}} \\
&=& f^*(K_X+D_X).
\end{eqnarray*}

\end{proof}

The two lemmas reduce the case of general $Y$ to the case of smooth $Y$. Note that $\tilde{X}$ may not be normal, but since $f$ is an isomorphism in codimension $1$, $\tilde{X}$ is non-singular in codimension $1$ and the non-normal points all lie in $D_{\tilde{X}}$.

\bibliographystyle{plain}
\bibliography{log}{}

\end{document}